\renewcommand{\phi}{\varphi}
\DeclareMathOperator*{\bigboxtimes}{\scalerel*{\boxtimes}{\sum}}
\theoremstyle{definition}
\newtheorem{theorem}{Theorem}
\newtheorem{remark}{Remark}
\newtheorem{definition}{Definition}
\newtheorem{proposition}{Proposition}
\title{A Functorial Generalization of Coxeter Groups}
\author{Vadim Leshkov}
\date{\today}
\address{Vadim Leshkov
\newline\hphantom{iii} Novosibirsk State University
\newline\hphantom{iii} 1 Pirogova St.
\newline\hphantom{iii} 630090, Novosibirsk, Russia}
\email{\href{mailto:v.leshkov@g.nsu.ru}{v.leshkov@g.nsu.ru}}
\begin{document}

\maketitle
\begin{abstract}
In the present work we describe the category $\mathsf{WC}_2$ of weighted 2-complexes and its subcategory $\mathsf{WC}_1$ of weighted graphs.
Since a Coxeter group is defined by its Coxeter graph, the construction of Coxeter groups defines a functor from $\mathsf{WC}_1$ to the category of groups.
We generalize the notion of a Coxeter group by extending the domain of the functor to the category $\mathsf{WC}_2$.
It appears that the resulting functor generalizes the construction of Coxeter groups, Gauss pure braid groups $GV\!P_n$ (introduced by~V.~Bardakov, P.~Bellingeri, and C.~Damiani in~2015), $k$-free braid groups on $n$ strands $G_n^k$ (introduced by V.~Manturov in~2015), and other quotients of Coxeter groups.

\vspace{2mm}
\noindent
\textbf{Key words and phrases:} Coxeter group, Gauss braid group, $k$-free braid group, weighted 2-complex, weighted graph

\vspace{2mm}
\noindent
\textbf{Mathematics Subject Classification 2020:}  20F05, 20J15
\end{abstract}
\vspace*{\fill}
{
\centering
\tableofcontents
}
\vspace*{\fill}
\clearpage
\newpage

\section{Introduction}

The Gauss virtual braid groups $GV\!B_n$ arose during the investigation of Gauss virtual links, which are called in the terminology of V.~Turaev ``homotopy classes of Gauss words'' \cite{Tur2}, and in the terminology of V.~Manturov, ``free knots''\cite{Man1}.

The pure Gauss braid group $GV\!P_n$, first introduced by V.~Bardakov, P.~Bellingeri, and C.~Damiani in their joint work \cite{GVB}, is a subgroup of $GV\!B_n$.
In \cite{GVB} it was shown that $GV\!P_n$ can be presented by the generators $\lambda_{i,j}, 1 \le i < j \le n$ and the relations of the following three types:
\begin{align*}
	\lambda_{i,j}^2&=1,\\
	\lambda_{i,j}\lambda_{k,l} &= \lambda_{k,l}\lambda_{i,j}, \\
	\lambda_{k,i}\lambda_{k,j}\lambda_{i,j} &= \lambda_{i,j}\lambda_{k,j}\lambda_{k,i},
\end{align*}
Here the symbols $i$, $j$, $k$, and $l$ denote pairwise distinct indices. 
From the presentation, it is evident that $GV\!P_n$ is a quotient of a Coxeter group by the following relations
$${(\lambda_{k,i}\lambda_{k,j}\lambda_{i,j})^2 = 1}.$$

In the last ten years, new groups were introduced and studied which are similar to Coxeter groups, but which have relations that contain more than two generators.
It is natural to investigate such groups.
In the present work, we introduce the notion of a weighted 2-complex, which generalizes the notion of a Coxeter graph and the notion of a 2-complex from combinatorial group theory.
Additionally, we introduce the group $G(C)$, the so called generalized Coxeter group, which is constructed from the weighted 2-complex $C$.
The class of generalized Coxeter groups generalizes the class of Coxeter groups in the way that, $G(C)$ is a Coxeter group if $C$ does not contain 2-cells. 

\section{Preliminaries}

For disjoint sets $A$ and $B$, we denote their union $A \cup B$ by $A \sqcup B$.
In this case, we call $A \sqcup B$ the \emph{disjoint union} of the sets $A$ and $B$.
If $A$ and $B$ are arbitrary sets, then by the map (or function)  $f \colon A \to B$ from $A$ to $B$ we indicate a subset of the cartesian product $f \subseteq A \times B$ such that for any $a \in A$ there exists a unique $b \in B$ such that $(a,b) \in f$.
If $(a,b) \in f$ we denote the element $b$ by the symbol $f(a)$ and say that $f(a) = b$.
If $A = \varnothing$, then $A \times B = \varnothing$, and so, $f$ is an empty subset in $A \times B$.
In this case we call the function $f \colon \varnothing \to B$ an \emph{empty function}.
If $C \subseteq A$ is a subset of the set $A$, then we call the map $f|_C \colon C \to B$ defined as $f|_C = f \cap (C \times B)$ the \emph{restriction} of the map $f \colon A \to B$ to the subset $C$.
The element $a \in A$, for which the following identity holds $f(a) = a$, we call a \emph{fixed point} of the map $f$.
We denote the set of all fixed points of a map $f$ as $\text{Fix}(f) := \{ a\in A\, \left|\, f(a) = a \right\}$.
For a set $X$ and a positive integer $n \in \mathbb{N}$ we denote the \emph{diagonal} of the $n$-th cartesian power of the set $X$ by $\text{diag}\left(X^n\right) := \{ (\underbrace{x, x, \ldots, x}_{n\ \text{times}})\, \left|\, x \in X \right\}$.

\section{Category of weighted 2-complexes}

In this chapter we define the category of weighted 2-complexes $\mathsf{WC}_2$.
We build products, coproducts, equalizers, and coequalizers in this category, and we prove that the category is complete and cocomplete. 

\subsection{Main Definitions}

A \emph{graph} or \emph{1-complex} is an ordered tuple of the following form
$$\Gamma = (V \sqcup E, \iota, \alpha, \omega),$$
where $V$ and $E$ are disjoint sets, $\iota \colon V \sqcup E \to V \sqcup E$, $\alpha, \omega \colon V \sqcup E \to V$ are maps which satisfy the following conditions:
$$V = \text{Fix}(\iota),\quad \iota \circ \iota = \text{id}_{V \sqcup E},\quad \alpha \circ \alpha = \alpha,\quad \alpha \circ \iota = \omega.$$
The sets $V$ and $E$ are called the \emph{set of vertices} and the \emph{set of edges} of the graph $\Gamma$, respectively.  
The map $\iota$ is called an \emph{inversion}.
We denote the image of an element $e \in V \sqcup E$ under the map $\iota$ by $e^{-1} := \iota(e)$ and call it the \emph{inverse of (the edge or vertex) $e$}.
The maps $\alpha$ and $\omega$ are called \emph{initial} and \emph{terminal} maps of the graph  $\Gamma$, respectively.
An edge $e \in E$ is called a \emph{loop}, if $\alpha(e) = \omega(e)$.
Edges $e_1, e_2 \in E$ are called \emph{multiple edges}, if $e_1 \neq e_2$ and their initial vertices and their ends coincide, i.e. $\alpha(e_1) = \alpha(e_2)$ and $\omega(e_1) = \omega(e_2)$.

\begin{remark}\label{remark_1_graphs}
From this point on, we assume that graphs do not contain multiple edges or loops. 
\end{remark}

We call a graph $(U, \iota|_{U}, \alpha|_{U}, \omega|_{U})$ a \emph{subgraph} in the graph $\Gamma$, if $U \subseteq V \sqcup E$ is a subset, which is closed under the operations $\iota$, $\alpha$, and $\omega$, i.e. $\iota(U), \alpha(U), \omega(U) \subseteq U$.
For any positive integer $t \in \mathbb{N}$, we call a subgraph $(U, \iota|_{U}, \alpha|_{U}, \omega|_{U})$ in the graph $\Gamma$ a \emph{cycle of length} $t$, if $U = \{v_i, e_i, e_i^{-1}\}_{i = 1}^t$, $\{v_i\}_{i = 1}^t \subseteq V$, $\{e_i, e_i^{-1}\}_{i = 1}^t \subseteq E$ and for every $i \in \{1,\ldots,t-1\}$ $\omega(e_i) = \alpha(e_{i+1}) = v_{i+1}$ and $\omega(e_t) = \alpha(e_1) = v_1$ holds.
We denote a cycle $c$ of length $t$ as $[e_1,\ldots,e_t] = c$, where $e_1,\ldots,e_t$ are all the edges of subgraph $c$, which are listed in the order in which  $\omega(e_i) = \alpha(e_{i+1})$ with $1 \leqslant i < t$ and $\alpha(e_1) = \omega(e_t)$.
We call a subgraph of the form  $c = (\{v\}, \iota|_{\{v\}}, \alpha|_{\{v\}}, \omega|_{\{v\}})$, where $v \in V$, a \emph{cycle of length zero}.
We denote a cycle of length zero $c$ by the symbol $[v] = c$, where $v$ is the single vertex of the subgraph $c$.
We denote the set of all cycles of every non-negative length in the graph $\Gamma$ by $\text{Cyc}(\Gamma)$.
For any $t \in \mathbb{N} \cup \{0\}$ and for any cycle  $c \in \text{Cyc}(\Gamma)$ of length $t$ we denote its length by $\text{len}(c) = t$.

We define a \emph{2-complex} $K$ as an ordered tuple:
$$K = (V \sqcup E \sqcup F, \iota, \alpha, \omega, \partial),$$
where $\Gamma = (V \sqcup E, \iota, \alpha, \omega)$ is a graph, $F$ is a set, called the \emph{set of 2-cells} of the 2-complex $K$, which is disjoint with the sets of vertices and edges: $F \cap (V \sqcup E) = \varnothing$, and $\partial \colon V \sqcup F \to \text{Cyc}(\Gamma)$ is a map which satisfies the following condition:
for every element $f \in V \sqcup F$ the image $\partial(f)$ is a cycle of length zero if and only if $f \in V$.
The map $\partial$ is called the \emph{boundary map} of the 2-complex $K$ and the image $\partial(f)$ of an element $f \in V \sqcup F$ is called a \emph{boundary} of an element $f$.

\begin{remark}\label{remark_2_complexes}
    Further, we assume that in any 2-complex, the boundary of any 2-cell is a cycle of length not less than $3$ and that distinct 2-cell in any 2-complex are required to have distinct boundaries,
    i.e. for 2-cells $f_1, f_2 \in F$ the following implication holds: $\partial(f_1) = \partial(f_2) \Rightarrow f_1 = f_2$.
\end{remark}

A \emph{weighted 2-complex} is a pair $(K, \mu)$, where $K = (V \sqcup E \sqcup F, \iota, \alpha, \omega, \partial)$ is a 2-complex,
$$\mu \colon V \sqcup E \sqcup F \to \mathbb{N} \cup \{\infty\}$$
is a map such that for every $x \in V \sqcup E \sqcup F$, $\mu(x) = 1$ if and only if $x \in V$, and $\mu(x) = \infty$ implies that $x \in E$.
The map $\mu$ is called the \emph{weight function} of the weighted 2-complex $K$.
For a weighted 2-complex $K = (V \sqcup E \sqcup F, \iota, \alpha, \omega, \partial, \mu)$ we denote its corresponding \emph{weighted graph} as $K^{(1)} = (V \sqcup E, \iota, \alpha, \omega, \mu)$.
Further, for any (weighted) 2-complex $K$ we denote $\text{Cyc}(K) = \text{Cyc}(K^{(1)})$.

Let $K_1 = (V_1 \sqcup E_1 \sqcup F_1, \iota_1, \alpha_1, \omega_1, \partial_1)$ and $K_2 = (V_2 \sqcup E_2 \sqcup F_2, \iota_2, \alpha_2, \omega_2, \partial_2)$ be 2-complexes.
\begin{definition}\label{def_1_morph_2_comp}
    A \emph{morphism of 2-complexes} from $K_1$ to $K_2$ is a map
    $$\phi \colon V_1 \sqcup E_1 \sqcup F_1 \to V_2 \sqcup E_2 \sqcup F_2,$$
   which satisfies the following conditions:
    \begin{enumerate}
        \item
        $\phi(V_1) \subseteq V_2$,\ $\phi(E_1) \subseteq V_2 \sqcup E_2$,\ $\phi(F_1) \subseteq F_2 \sqcup V_2$;
        
        \item
        the following diagrams commute:
        \[
        \begin{tikzcd}[row sep=large]
            V_1 \sqcup E_1 \ar[d,"\iota_1"'] \ar[r,"\phi"] & V_2 \sqcup E_2 \ar[d,"\iota_2"] & V_1 \sqcup E_1 \ar[d,"\alpha_1"'] \ar[r,"\phi"] & V_2 \sqcup E_2 \ar[d,"\alpha_2"] & V_1 \sqcup E_1 \ar[d,"\omega_1"'] \ar[r,"\phi"] & V_2 \sqcup E_2 \ar[d,"\omega_2"] \\
            V_1 \sqcup E_1 \ar[r,"\phi"]  & V_2 \sqcup E_2 & V_1 \sqcup E_1 \ar[r,"\phi"]  & V_2 \sqcup E_2 & V_1 \sqcup E_1 \ar[r,"\phi"]  & V_2 \sqcup E_2
        \end{tikzcd}
        \]
        \[
        \begin{tikzcd}[row sep=large]
            V_1 \sqcup F_1 \ar[d,"\partial_1"'] \ar[r,"\phi"] & V_2 \sqcup F_2 \ar[d,"\partial_2"] \\
            \text{Cyc}\left(K_1\right) \ar[r,"\widehat{\phi}"]  & \text{Cyc}\left(K_2\right)
        \end{tikzcd}
        \]
    \end{enumerate}
   where the map
    $$\widehat{\phi} \colon \text{Cyc}\left(K_1\right) \longrightarrow \text{Cyc}\left(K_2\right),\quad [e_1,\ldots,e_t] \mapsto [\phi(e_1),\ldots,\phi(e_t)],$$
    is the map of cycles induced by the map $\phi$.
  It is easy to see that the image of a cycle in $K_1$ under the mapping $\phi$ is a cycle in $K_2$.
  We denote the fact that $\phi$ is a morphism of 2-complexes from $K_1$ to $K_2$ as $\phi \colon K_1 \to K_2$.
\end{definition}

We prove that any morphism of 2-complexes is uniquely defined by its action on vertices.
\begin{proposition}\label{prop_1_morphisms_vertices}
    Let $\phi, \psi \colon K_1 \to K_2$ be morphisms of 2-complexes.
    Then $\phi|_{V_1} = \psi|_{V_1}$ holds if and only if  $\phi = \psi$.
\end{proposition}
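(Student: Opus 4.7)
The ``only if'' direction is immediate, so the task is to show that agreement on $V_1$ forces agreement on $E_1$ and on $F_1$. My plan is to treat the edges first, and then deduce the result on $2$-cells from the fact (Remark~2) that a $2$-cell is uniquely determined by its boundary cycle.

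For an edge $e \in E_1$, the commutativity of the $\alpha$- and $\omega$-squares in Definition~\ref{def_1_morph_2_comp} gives $\alpha_2(\phi(e)) = \phi(\alpha_1(e)) = \psi(\alpha_1(e)) = \alpha_2(\psi(e))$, and analogously for $\omega_2$. I would then split into cases according to whether $\phi(e)$ and $\psi(e)$ land in $V_2$ or $E_2$. If $\phi(e) \in V_2$, then $\phi(e) = \alpha_2(\phi(e)) = \alpha_2(\psi(e))$ and $\phi(e) = \omega_2(\psi(e))$, so $\alpha_2(\psi(e)) = \omega_2(\psi(e))$; this forces $\psi(e) \in V_2$ as well, since otherwise $\psi(e)$ would be a loop in $K_2$, contradicting Remark~\ref{remark_1_graphs}. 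The symmetric argument handles $\psi(e) \in V_2$. In the remaining case where both $\phi(e), \psi(e) \in E_2$ share initial and terminal vertices, the no-multiple-edges clause of Remark~\ref{remark_1_graphs} yields $\phi(e) = \psi(e)$.

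With $\phi|_{E_1} = \psi|_{E_1}$ established, take $f \in F_1$ with $\partial_1(f) = [e_1,\ldots,e_t]$, where $t \geq 3$ by Remark~\ref{remark_2_complexes}. Applying $\widehat{\phi}$ and $\widehat{\psi}$ gives $\widehat{\phi}(\partial_1(f)) = [\phi(e_1),\ldots,\phi(e_t)] = [\psi(e_1),\ldots,\psi(e_t)] = \widehat{\psi}(\partial_1(f))$, and the commutativity of the $\partial$-square yields $\partial_2(\phi(f)) = \partial_2(\psi(f))$. By the defining property of a $2$-complex, $\partial_2(\phi(f))$ is a cycle of length $0$ precisely when $\phi(f) \in V_2$, so $\phi(f)$ and $\psi(f)$ lie in the same summand $V_2$ or $F_2$. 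If both lie in $V_2$, the equality $[\phi(f)] = [\psi(f)]$ of length-zero cycles identifies the vertices; if both lie in $F_2$, Remark~\ref{remark_2_complexes} forces $\phi(f) = \psi(f)$.

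The only genuinely delicate step is ruling out the mixed situation $\phi(e) \in V_2$, $\psi(e) \in E_2$ for edges; everything else is a direct appeal to the standing hypotheses in Remarks~\ref{remark_1_graphs} and~\ref{remark_2_complexes}. Thus the main obstacle, if any, is organizing the edge case so that the absence of loops and of multiple edges is invoked exactly where needed, after which the $2$-cell case is essentially automatic.
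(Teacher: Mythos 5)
Your proof is correct and follows essentially the same route as the paper's: vertex agreement plus the commuting $\alpha$-, $\omega$-, and $\partial$-squares, with Remark~\ref{remark_1_graphs} (no loops or multiple edges) settling the edge case and Remark~\ref{remark_2_complexes} settling the $2$-cell case. You phrase it directly where the paper argues by contradiction, and you are somewhat more explicit about the degenerate subcases where $\phi(e)$ or $\phi(f)$ lands in $V_2$, but the substance is identical.
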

\begin{proof}
    Obviously, if  $\phi = \psi$, then $\phi|_{V_1} = \psi|_{V_1}$.
    Let $\phi|_{V_1} = \psi|_{V_1}$ and let there be an edge $e \in E_1$ such that  $\phi(e) \neq \psi(e)$.
    Consider the following two cases: $(\alpha_2 \circ \phi)(e) = (\alpha_2 \circ \psi)(e)$ and $(\alpha_2 \circ \phi)(e) \neq (\alpha_2 \circ \psi)(e)$.
    If $(\alpha_2 \circ \phi)(e) = (\alpha_2 \circ \psi)(e)$, then implementing Remark~\ref{remark_1_graphs} we have $(\omega_2 \circ \phi)(e) \neq (\omega_2 \circ \psi)(e)$ and from the commutativity of the corresponding diagrams, we have:
    $$(\phi \circ \omega_1)(e)= (\omega_2 \circ \phi)(e) \neq (\omega_2 \circ \psi)(e) = (\psi \circ \omega_1)(e).$$
    Thus, there exists a vertex $v = \omega_1(e) \in V_1$ such that $\phi(v) \neq \psi(v)$.
    The existence of such vertex contradicts the assumption that $\phi|_{V_1} = \psi|_{V_1}$.
    If $(\alpha_2 \circ \phi)(e) \neq (\alpha_2 \circ \psi)(e)$, then from the commutativity of the corresponding diagrams:
    $$(\psi \circ \alpha_1)(e) = (\alpha_2 \circ \phi)(e) \neq (\alpha_2 \circ \psi)(e) = (\psi \circ \alpha_1)(e).$$
    There exists a vertex $v = \alpha_1(e) \in V_1$ for which $\phi(v) \neq \psi(v)$.
    The inequality contradicts the assumption that $\phi|_{V_1} = \psi|_{V_1}$.
    In both cases, we reach a contradiction.
    Therefore, $\phi|_{V_1 \sqcup E_1} = \psi|_{V_1 \sqcup E_1}$.
    Let there be a 2-cell $f \in F_1$ such that $\phi(f) \neq \psi(f)$.
    Then, from Remark~\ref{remark_2_complexes}, we can conclude that $(\partial_2 \circ \phi)(f) \neq (\partial_2 \circ \psi)(f)$ and from the commutativity of the corresponding diagrams, we have:
    $$(\widehat{\phi} \circ \partial_1)(f) = (\partial_2 \circ \phi)(f) \neq (\partial_2 \circ \psi)(f) = (\widehat{\psi} \circ \partial_1)(f).$$
    In this way, we have a cycle $c = \partial_1(f) \in \text{Cyc}(K_1)$ such that  $\widehat{\phi}(c) \neq \widehat{\psi}(c)$, where
    $$\widehat{\phi},\, \widehat{\psi} \colon \text{Cyc}(K_1) \to \text{Cyc}(K_2)$$
    are maps of cycles, induced by $\phi$ and $\psi$.
    The obtained inequality contradicts the assumption that  $\phi|_{V_1 \sqcup E_1} = \psi|_{V_1 \sqcup E_1}$.
    Therefore, $\phi = \psi$.
\end{proof}

\begin{remark}\label{remark_3_infinity}
    From this point on, we assume that the infinity symbol $\infty$ is divisible by any natural number and by itself.
\end{remark}
Let $C_1 = (K_1, \mu_1)$ and $C_2 = (K_2, \mu_2)$ be weighted 2-complexes.
\begin{definition}
    A \emph{morphism of weighted 2-complexes} from $C_1$ to $C_2$ is a morphism of 2-complexes $\phi \colon K_1 \to K_2$ such that for any element $x \in V_1 \sqcup E_1 \sqcup F_1$ its weight $\mu_1(x)$ is divisible by the weight of its image $\mu_2(\phi(x))$.
    We denote the fact that $\phi$ is a morphism of weighted 2-complexes from $C_1$ to $C_2$ as $\phi \colon C_1 \to C_2$.
\end{definition}

\begin{proposition}\label{prop_2_composition}
    Let $\phi \colon C_1 \to C_2$, $\psi \colon C_2 \to C_3$ be morphisms of weighted 2-complexes.
    Then, their composition $\psi \circ \phi \colon C_1 \to C_3$ is a morphism of weighted 2-complexes.
\end{proposition}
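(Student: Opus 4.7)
The plan is to verify first that $\psi \circ \phi$ is a morphism of the underlying 2-complexes (in the sense of Definition~\ref{def_1_morph_2_comp}), and then separately verify the divisibility condition on weights. Composition of set maps is associative, so on underlying carriers there is nothing to define; all that is needed is to check the structural conditions.

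For condition (1) of Definition~\ref{def_1_morph_2_comp}, I would chase the given inclusions: from $\phi(V_1) \subseteq V_2$ and $\psi(V_2) \subseteq V_3$ we immediately get $(\psi\circ\phi)(V_1) \subseteq V_3$; for edges, $(\psi\circ\phi)(E_1) \subseteq \psi(V_2 \sqcup E_2) = \psi(V_2) \cup \psi(E_2) \subseteq V_3 \sqcup (V_3 \sqcup E_3) = V_3 \sqcup E_3$; the same bookkeeping works for $F_1$ using $\phi(F_1) \subseteq F_2 \sqcup V_2$ and $\psi(F_2 \sqcup V_2) \subseteq (F_3 \sqcup V_3) \cup V_3 = F_3 \sqcup V_3$.

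For condition (2), the three squares for $\iota$, $\alpha$, $\omega$ are obtained by pasting the corresponding squares for $\phi$ and $\psi$ one next to the other; pasting of commutative squares is commutative, so e.g. $\iota_3 \circ (\psi\circ\phi) = (\iota_3 \circ \psi) \circ \phi = (\psi \circ \iota_2) \circ \phi = \psi \circ (\iota_2 \circ \phi) = \psi \circ (\phi \circ \iota_1) = (\psi\circ\phi)\circ\iota_1$, and analogously for $\alpha$ and $\omega$. For the boundary square, the key observation is that the induced map on cycles is functorial in the obvious sense: $\widehat{\psi\circ\phi} = \widehat{\psi}\circ\widehat{\phi}$, since by definition $\widehat{\psi\circ\phi}([e_1,\ldots,e_t]) = [(\psi\circ\phi)(e_1),\ldots,(\psi\circ\phi)(e_t)] = \widehat{\psi}([\phi(e_1),\ldots,\phi(e_t)]) = (\widehat\psi\circ\widehat\phi)([e_1,\ldots,e_t])$. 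Pasting the two boundary squares and using this identity yields the boundary square for $\psi\circ\phi$.

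Finally, for the divisibility on weights, take any $x \in V_1 \sqcup E_1 \sqcup F_1$. By assumption $\mu_2(\phi(x)) \mid \mu_1(x)$ and $\mu_3(\psi(\phi(x))) \mid \mu_2(\phi(x))$, so by transitivity of divisibility (which, by Remark~\ref{remark_3_infinity}, extends in the obvious way to $\infty$) we conclude $\mu_3((\psi\circ\phi)(x)) \mid \mu_1(x)$, giving the required condition. The only even remotely subtle step here is the commutativity of the boundary square, but once one observes that $\widehat{(-)}$ respects composition it reduces to the same pasting argument as the other squares; everything else is a direct unwinding of definitions.
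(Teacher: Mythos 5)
Your proof is correct and follows essentially the same route as the paper's: inclusion chasing for condition (1), pasting of commutative squares for condition (2), and transitivity of divisibility for the weights. The one place you are more careful than the paper is in explicitly verifying $\widehat{\psi\circ\phi} = \widehat{\psi}\circ\widehat{\phi}$ for the boundary square, a functoriality fact the paper uses implicitly (its diagram even labels the bottom arrows $\phi$, $\psi$ rather than $\widehat{\phi}$, $\widehat{\psi}$).
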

\begin{proof}
    It is easy to see that the first part of Definition~\ref{def_1_morph_2_comp} is satisfied for the composition.
    Indeed, $(\psi \circ \phi)(V_1) \subseteq \psi(V_2) \subseteq V_3$,\ $(\psi \circ \phi)(E_1) \subseteq \psi(V_2 \sqcup E_2) \subseteq V_3 \sqcup E_3$, $(\psi \circ \phi)(F_1) \subseteq \psi(F_2 \sqcup V_2) \subseteq F_3 \sqcup V_3$.
    Now we prove that the second part of Definition~\ref{def_1_morph_2_comp} is satisfied for the composition. 
    Let $\sigma_i \in \{ \iota_i, \alpha_i, \omega_i \}$, where $i = 1, 2, 3$.
    Consider the following diagram:
    \[
    \begin{tikzcd}[row sep=huge]
            V_1 \sqcup E_1 \ar[d,"\sigma_1"] \ar[r,"\phi"] & V_2 \sqcup E_2 \ar[d,"\sigma_2"] \ar[r,"\psi"] & V_3 \sqcup E_3 \ar[d,"\sigma_3"]\\
            V_1 \sqcup E_1 \ar[r,"\phi"]  & V_2 \sqcup E_2 \ar[r,"\psi"] & V_3 \sqcup E_3
    \end{tikzcd}
    \]
    Due to the fact that the left-hand and the right-hand squares commute, the following identities hold: 
    $$\psi \circ \phi \circ \sigma_1 = \psi \circ \sigma_2 \circ \phi = \sigma_3 \circ \psi \circ \phi.$$
    Now consider the diagram:
    \[
    \begin{tikzcd}[row sep=huge]
        V_1 \sqcup F_1 \ar[d,"\partial_1"] \ar[r,"\phi"] & V_2 \sqcup F_2 \ar[d,"\partial_2"] \ar[r,"\psi"] & V_3 \sqcup F_3 \ar[d,"\partial_3"]\\
        \text{Cyc}(K_1) \ar[r,"\phi"]  & \text{Cyc}(K_2) \ar[r,"\psi"] & \text{Cyc}(K_3)
    \end{tikzcd}
    \]
    Due to the fact that the left-hand and the right-hand squares commute, the following identities hold:
    $$\psi \circ \phi \circ \partial_1 = \psi \circ \partial_2 \circ \phi = \partial_3 \circ \psi \circ \phi.$$
    Therefore, part (2) of Definition~\ref{def_1_morph_2_comp} is satisfied for the composition $\psi \circ \phi$.
    Thus, the composition $\psi \circ \phi \colon K_1 \to K_3$ is a morphism of 2-complexes. 
    Because of the transitivity of the divisibility relation the weight $\mu_1(x)$ is divisible by the weight $\mu_2((\psi \circ \phi)(x))$ for any $x \in V_1 \sqcup E_1 \sqcup F_1$.
    In this way, the composition  $\psi \circ \phi \colon C_1 \to C_3$ is a morphism of weighted 2-complexes. 
\end{proof}

We denote the \emph{category of weighted 2-complexes} by $\mathsf{WC}_\mathsf{2}$.
The objects of $\mathsf{WC}_\mathsf{2}$ form the class $\text{Ob}\left(\mathsf{WC}_\mathsf{2}\right)$ of all possible weighted 2-complexes and for two its objects  $C_1, C_2 \in \text{Ob}\left(\mathsf{WC}_\mathsf{2}\right)$ the set $\text{Hom}_{\mathsf{WC}_\mathsf{2}}(C_1, C_2)$ of all morphisms from $C_1$ to $C_2$ consists of all possible morphisms of weighted 2-complexes from $C_1$ to $C_2$.
The category $\mathsf{WC}_\mathsf{2}$ has the initial and terminal objects denoted by $C_\varnothing$ and $C_\text{pt}$, respectively, defined as:
$$C_\varnothing = (\varnothing, \iota_\varnothing, \alpha_\varnothing, \omega_\varnothing, \partial_\varnothing, \mu_\varnothing),\quad C_\text{pt} = (\{v\}, \iota_\text{pt}, \alpha_\text{pt}, \omega_\text{pt}, \partial_\text{pt}, \mu_\text{pt}),$$
where
\begin{align*}
    \iota_\varnothing, \alpha_\varnothing,  \omega_\varnothing &\colon \varnothing \to \varnothing,\,& \partial_\varnothing &\colon \varnothing \to \varnothing,\,& \mu_\varnothing &\colon \varnothing \to \mathbb{N} \cup \{\infty\},\\
    \iota_\text{pt}, \alpha_\text{pt}, \omega_\text{pt} &\colon \{v\} \to \{v\},\,& \partial_\text{pt} &\colon \varnothing \to \{\, (\{v\}, \iota_\text{pt}, \alpha_\text{pt}, \omega_\text{pt})\, \},\,& \mu_\text{pt} &\colon \varnothing \to \mathbb{N} \cup \{\infty\}.
\end{align*}
The weighted 2-complex $C_\varnothing$ is called the \textit{empty complex}, and the weighted 2-complex $C_\text{pt}$ is called the \emph{single point complex}.
The category $\mathsf{WC}_\mathsf{2}$ contains the subcategory $\mathsf{WC}_\mathsf{1}$, which is called the \emph{category of weighted graphs}.
The class of objects $\text{Ob}\left(\mathsf{WC}_\mathsf{1}\right)$ of the category $\mathsf{WC}_\mathsf{1}$ consists of all weighted graphs (i.e. weighted 2-complexes without any 2-cell), while morphisms in the category $\mathsf{WC}_\mathsf{1}$ are the same as in $\mathsf{WC}_\mathsf{2}$.
The category $\mathsf{WC}_\mathsf{1}$ is a complete subcategory in the category $\mathsf{WC}_\mathsf{2}$, i.e. for any pair  $\Gamma_1, \Gamma_2 \in \text{Ob}\left(\mathsf{WC}_\mathsf{1}\right)$:
$$\text{Hom}_{\mathsf{WC}_\mathsf{1}}(\Gamma_1, \Gamma_2) = \text{Hom}_{\mathsf{WC}_\mathsf{2}}(\Gamma_1, \Gamma_2).$$
Note that the category  $\mathsf{WC}_\mathsf{2}$ is concrete, that is, there exists a \emph{forgetful functor} 
$$\text{V} \colon \mathsf{WC}_\mathsf{2} \longrightarrow \mathsf{Set}$$
from the category $\mathsf{WC}_\mathsf{2}$ of weighted 2-complexes to the category $\mathsf{Set}$ of sets.
For any weighted 2-complex ${C = (V \sqcup E \sqcup F, \iota, \alpha, \omega, \partial, \mu)}$ and for any morphism $\phi \in \text{Hom}_{\mathsf{WC}_\mathsf{2}}(C_1, C_2)$, where $C_1, C_2 \in \text{Ob}(\mathsf{WC}_\mathsf{2})$, the image of the weighted 2-complex $C$ and morphism $\phi$ under the forgetful functor are defined in the following way:
$$\text{V}(C) = V,\quad \text{V}(\phi) = \phi|_{V_1} \colon V_1 \to V_2.$$
The functor $\text{V}$ is faithful, that is, for any $C_1, C_2 \in \text{Ob}\left(\mathsf{WC}_\mathsf{2}\right)$ the map
$$\text{V}_{C_1, C_2} \colon \text{Hom}_{\mathsf{WC}_\mathsf{2}}(C_1, C_2) \longrightarrow \text{Hom}_{\mathsf{Set}}(\text{V}(C_1), \text{V}(C_2)),$$
induced by the functor $\text{V}$, is injective.
We define the \emph{free functor} $\text{FC} \colon \mathsf{Set} \to \mathsf{WC}_\mathsf{2}$.
For a set $M \in \text{Ob}(\mathsf{Set})$ and a map $f \colon A \to B$, where $A, B \in \text{Ob}(\mathsf{Set})$ are sets, we define their images under the functor $\text{FC}$ as:
$$\text{FC}(M) = (M, \text{id}_M, \text{id}_M, \text{id}_M, \partial_\varnothing, \mu_\varnothing),\quad \text{FC}(f) = f \colon \text{FC}(A) \to \text{FC}(B),$$
where $\partial_\varnothing \colon \varnothing \to \varnothing$, $\mu_\varnothing \colon \varnothing \to \mathbb{N} \cup \{\infty\}$.
The functor $\text{FC}$ is faithful and full, that is, for any sets $X, Y \in \text{Ob}(\mathsf{Set})$ there is a bijection:
$$\text{FC}_{X,Y} \colon \text{Hom}_{\mathsf{Set}}(X, Y) \overset{\cong}{\longrightarrow} \text{Hom}_{\mathsf{WC}_\mathsf{2}}(\text{FC}(X), \text{FC}(Y)),$$
induced by the functor $\text{FC}$.
For a set $M$ the weighted 2-complex $\text{FC}(M)$ is called the \emph{free (weighted) 2-complex on} (or \emph{generated by}) the set $M$.

\begin{theorem}
    The functor $\text{FC}$ is left adjoint to functor $\text{V}$, that is for any objects $X \in \text{Ob}(\mathsf{Set})$, $Y \in \text{Ob}(\mathsf{WC}_\mathsf{2})$:
    $$\text{Hom}_{\mathsf{WC}_\mathsf{2}}(\text{FC}(X), Y) \cong \text{Hom}_{\mathsf{Set}}(X, \text{V}(Y)).$$
\end{theorem}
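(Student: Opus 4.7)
The plan is to exhibit mutually inverse maps between the two hom-sets and then observe that naturality is formal.

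First, I would define the forward map
$$\Phi_{X,Y} \colon \text{Hom}_{\mathsf{WC}_\mathsf{2}}(\text{FC}(X), Y) \longrightarrow \text{Hom}_{\mathsf{Set}}(X, \text{V}(Y)),\quad \phi \mapsto \text{V}(\phi) = \phi|_X,$$
which is well-defined because condition~(1) of Definition~\ref{def_1_morph_2_comp} forces $\phi(X) \subseteq \text{V}(Y)$, since $X$ is precisely the vertex set of $\text{FC}(X)$.

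Next I would construct the backward map $\Psi_{X,Y}$. Given $f \colon X \to \text{V}(Y)$, I would like to set $\Psi_{X,Y}(f) = f$, viewed as a map $X \to V_Y \sqcup E_Y \sqcup F_Y$. The bulk of the argument is checking this is a morphism of weighted 2-complexes. The inclusions in part~(1) of Definition~\ref{def_1_morph_2_comp} hold because $\text{FC}(X)$ has no edges and no 2-cells, so only the condition $f(X) \subseteq V_Y$ needs to hold, which it does by hypothesis. For the commuting squares in part~(2), note that in $\text{FC}(X)$ the maps $\iota$, $\alpha$, $\omega$ are all $\text{id}_X$, so each square reduces to the identity $\sigma_Y \circ f = f$; this follows since $V_Y = \text{Fix}(\iota_Y)$ together with $\alpha_Y \circ \alpha_Y = \alpha_Y$ (which forces $\alpha_Y|_{V_Y} = \text{id}_{V_Y}$) and $\omega_Y = \alpha_Y \circ \iota_Y$ imply that $\iota_Y, \alpha_Y, \omega_Y$ all fix vertices. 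The boundary square reduces to $\widehat{f}([x]) = \partial_Y(f(x))$; both sides equal the length-zero cycle $[f(x)]$, since $f(x) \in V_Y$ and $\partial_Y$ sends a vertex to its length-zero cycle. Finally, the divisibility condition on weights is immediate because $\mu(x) = 1$ and $\mu(f(x)) = 1$ for every $x \in X$.

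Once $\Phi$ and $\Psi$ are seen to be well-defined, the identity $\Phi \circ \Psi = \text{id}$ is tautological from the definitions. For $\Psi \circ \Phi = \text{id}$, given $\phi \in \text{Hom}_{\mathsf{WC}_\mathsf{2}}(\text{FC}(X), Y)$, the morphism $\Psi(\Phi(\phi))$ agrees with $\phi$ on the vertex set $X$ of $\text{FC}(X)$, so by Proposition~\ref{prop_1_morphisms_vertices} the two morphisms coincide. This gives the claimed bijection.

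There is no real obstacle in this proof: everything follows once one realizes that $\text{FC}(X)$ is a discrete complex (no edges, no 2-cells, all structure maps identities), so a morphism out of it carries no data beyond its action on vertices, while Proposition~\ref{prop_1_morphisms_vertices} guarantees uniqueness. The one place that deserves care is verifying that the second diagram in Definition~\ref{def_1_morph_2_comp} commutes on the vertex part of the domain $V_1 \sqcup F_1 = X \sqcup \varnothing$; this reduces to the observation about length-zero cycles above. Naturality of the bijection in $X$ and $Y$ — needed for a genuine adjunction — is routine: both $\Phi_{X,Y}$ and $\Psi_{X,Y}$ are induced by restriction to vertices, and post- or precomposition in either category commutes with this restriction on the nose.
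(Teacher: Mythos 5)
Your proof is correct and follows essentially the same route as the paper: the bijection is restriction to vertices, injectivity (equivalently $\Psi\circ\Phi=\mathrm{id}$) comes from Proposition~\ref{prop_1_morphisms_vertices}, and surjectivity comes from the fact that $\text{FC}(X)$ is discrete. You actually supply more detail than the paper does at the surjectivity step, explicitly verifying that a set map $X\to\text{V}(Y)$ satisfies all the conditions of Definition~\ref{def_1_morph_2_comp}, where the paper merely asserts that $f$ ``induces a morphism.''
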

\begin{proof}
    For $X, Y \in \text{Ob}(\mathsf{WC}_2)$ consider the mapping $\Phi_{X,Y}\colon \text{Hom}_{\mathsf{WC}_\mathsf{2}}(\text{FC}(X), Y) \to \text{Hom}_{\mathsf{Set}}(X, \text{V}(Y))$, defined in such a way, that for any $\phi \in \text{Hom}_{\mathsf{WC}_\mathsf{2}}(\text{FC}(X), Y)$:
    $$\Phi_{X,Y}(\phi)\colon X \longrightarrow \text{V}(Y),\quad x \mapsto \phi(x).$$
    Let $\phi, \psi \in \text{Hom}_{\mathsf{WC}_\mathsf{2}}(\text{FC}(X), Y)$ be morphisms of weighted 2-complexes such that $\Phi_{X,Y}(\phi) = \Phi_{X,Y}(\psi)$.
    Then for any $x \in X$, $\phi(x) = \psi(x)$.
    In other words, $\phi$ coincides with $\psi$ on the set of vertices of the complex  $\text{FC}(X)$.
    It means that by Proposition~\ref{prop_1_morphisms_vertices}, we have $\phi = \psi$.
    Hence, $\Phi_{X,Y}$ is injective.
    Let $f \in \text{Hom}_{\mathsf{Set}}(X, \text{V}(Y))$ be an arbitrary mapping between sets.
    The map $f$ is a map from the set of vertices of the complex  $\text{FC}(X)$ to the set of vertices of the complex $Y$.
    It means that $f$ induces a morphism of weighted 2-complexes  $\phi\colon \text{FC}(X) \to Y$, $\phi|_X = f$.
    Thus, $\Phi_{X,Y}(\phi) = f$.
    It follows that $\Phi_{X,Y}$ is surjective.
    Therefore, the mapping $\Phi_{X,Y}$ is bijective.
    Which means that $\Phi_{X,Y}$ defines an isomorphism $\text{Hom}_{\mathsf{WC}_\mathsf{2}}(\text{FC}(X), Y) \cong \text{Hom}_{\mathsf{Set}}(X, \text{V}(Y))$.
    It is easy to see that the class $\Phi = \{ \Phi_{X,Y} \}_{X,Y \in \text{Ob}(\mathsf{WC}_2)}$ defines a natural equivalence of bifunctors
    $$\Phi\colon \text{Hom}_{\mathsf{WC}_\mathsf{2}}(\text{FC}(-), -)\ \Longrightarrow\ \text{Hom}_{\mathsf{Set}}(-, \text{V}(-)).$$
\end{proof}

\subsection{Operations on weighted 2-complexes}
Let $K = (V \sqcup E \sqcup F, \iota, \alpha, \omega, \partial)$ be a 2-complex and let $\sim$ be an equivalence relation on the set of vertices $V$.
Any such equivalence relation $\sim$ on the set of vertices $V$ can be extended to the set $V \sqcup E \sqcup F$ in the following way.
For all $e_1, e_2 \in E$, $f_1, f_2 \in F$ assume that:
\begin{itemize}
    \item[(1)] $e_1 \sim e_2$ if and only if $\alpha(e_1) \sim \alpha(e_2)$ and $\omega(e_1) \sim \omega(e_2)$;
    \item[(2)] $f_1 \sim f_2$ if and only if $\text{len}(f_1) = \text{len}(f_2)$ and $f_1 = [e_1, \ldots,e_t]$, $f_2 = [e_1', \ldots, e_t']$, where $e_i \sim e_i'$ for all $1 \leqslant i \leqslant t$.
\end{itemize}
We denote the equivalence class of an element $x \in V \sqcup E \sqcup F$ as $[x]_\sim$\, .

\begin{definition}\label{def_3_eq_rel}
    Any equivalence relation which is extended from the set of vertices $V$ to the 2-complex $K$ as described above is called an \emph{equivalence relation on the 2-complex $K$}
\end{definition}

Let $\sim$ be an equivalence relation on $K$.

\begin{definition}
    The \emph{quotient complex of 2-complex $K$ by the relation $\sim$} is the 2-complex
    $$\faktor{K}{\sim}\ = \left(\, (V \sqcup E \sqcup F)/\sim\, ,\ \widetilde{\iota},\ \widetilde{\alpha},\ \widetilde{\omega},\ \widetilde{\partial}\, \right),$$
    where $\widetilde{\iota},\ \widetilde{\alpha},\ \widetilde{\omega} \colon \faktor{(V \sqcup E)}{\sim} \longrightarrow \faktor{(V \sqcup E)}{\sim}$,\quad $\widetilde{\partial} \colon \faktor{(V \sqcup F)}{\sim} \longrightarrow \text{Cyc}\left( \faktor{K}{\sim} \right)$,
    $$\widetilde{\iota} \colon \left[ x \right]_{\sim} \mapsto \left[\iota(x)\right]_{\sim},\quad \widetilde{\alpha} \colon \left[ x \right]_{\sim} \mapsto \left[\alpha(x)\right]_{\sim},\quad \widetilde{\omega} \colon \left[ x \right]_{\sim} \mapsto \left[\omega(x)\right]_{\sim},\quad \widetilde{\partial} \colon \left[ x \right]_{\sim} \mapsto \faktor{\partial(x)}{\sim}.$$
    Let $C = (K, \mu)$ be a weighted 2-complex.
    The \emph{quotient complex of the weighted 2-complex $C$ by the relation $\sim$} is the weighted 2-complex $\faktor{C}{\sim}\ = \left(\faktor{K}{\sim},\ \widetilde{\mu}\right)$, where
    \begin{align*}
        \widetilde{\mu} \colon \faktor{(V \sqcup E \sqcup F)}{\sim} &\to \mathbb{N} \cup \{\infty\},\\
        \left[x\right]_{\sim} &\mapsto \text{GCD}\left\{\mu(s)\, |\ s \in V \sqcup E \sqcup F ,\ s \sim x \right\}.
    \end{align*}
\end{definition}

\begin{proposition}
    The maps $\widetilde{\iota}$, $\widetilde{\alpha}$, $\widetilde{\omega}$,
    $\widetilde{\partial}$, $\widetilde{\mu}$ are well-defined.
\end{proposition}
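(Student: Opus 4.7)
The plan is to handle the five maps separately by the standard criterion: each is defined on a quotient set via a choice of representative, so well-definedness reduces to checking that equivalent representatives give equal outputs. The key preliminary observation is that the extension clauses (1) and (2) only compare edges with edges and 2-cells with 2-cells, while $\sim$ was originally defined on $V$ alone; therefore every $\sim$-equivalence class lies entirely within one of the strata $V$, $E$, $F$. This stratified structure enables a clean case analysis for each map.

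For $\widetilde{\iota}$, $\widetilde{\alpha}$, $\widetilde{\omega}$, I assume $x \sim y$ in $V \sqcup E$ and split on the stratum. If $x, y \in V$, then $\iota(x) = x$ and $\iota(y) = y$ because $V = \text{Fix}(\iota)$, while $\alpha(x) = x$ and $\alpha(y) = y$ (and likewise for $\omega$) under the standard convention forced by $\alpha \circ \alpha = \alpha$ together with $\alpha$ taking values in $V$, so the required equivalences reduce to $x \sim y$ itself. If $x, y \in E$, clause (1) gives $\alpha(x) \sim \alpha(y)$ and $\omega(x) \sim \omega(y)$, which is precisely well-definedness of $\widetilde{\alpha}$ and $\widetilde{\omega}$; for $\widetilde{\iota}$ I combine $\alpha \circ \iota = \omega$ with its consequence $\omega \circ \iota = \alpha$ (via $\iota \circ \iota = \text{id}$) to rewrite $\alpha(\iota(x)) \sim \alpha(\iota(y))$ and $\omega(\iota(x)) \sim \omega(\iota(y))$ as the already-known $\omega(x) \sim \omega(y)$ and $\alpha(x) \sim \alpha(y)$, and clause (1) in the reverse direction then yields $\iota(x) \sim \iota(y)$.

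The main obstacle is $\widetilde{\partial}$, because its value is itself a cycle in the quotient complex, so two issues must be settled simultaneously. First, one must check that the putative output $\partial(x)/\sim$ really is a cycle in $K/\sim$: if $\partial(x) = [e_1,\ldots,e_t]$ in $K$, the list $[[e_1]_\sim,\ldots,[e_t]_\sim]$ in $K/\sim$ is consecutive because $\widetilde{\omega}([e_i]_\sim) = [\omega(e_i)]_\sim = [\alpha(e_{i+1})]_\sim = \widetilde{\alpha}([e_{i+1}]_\sim)$, using the already-verified formulas for $\widetilde{\alpha}$, $\widetilde{\omega}$, with the closure condition handled analogously; the vertex case produces the length-zero cycle $[[v]_\sim]$. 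Second, for independence of representative, if $x \sim y$ with both in $F$, clause (2) directly supplies a matching $\partial(x) = [e_1,\ldots,e_t]$ and $\partial(y) = [e_1',\ldots,e_t']$ with $e_i \sim e_i'$ for each $i$, hence $[e_i]_\sim = [e_i']_\sim$ and the two cycles coincide in $\text{Cyc}(K/\sim)$; the vertex case is trivial because $[[x]_\sim] = [[y]_\sim]$ whenever $x \sim y$.

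Finally, $\widetilde{\mu}$ is well-defined essentially by inspection. The defining formula is a $\text{GCD}$ (interpreted via Remark~\ref{remark_3_infinity}) over the set $\{s \in V \sqcup E \sqcup F \mid s \sim x\}$, which depends only on the equivalence class of $x$ and not on the choice of representative, since $\sim$ is an equivalence relation. The stratified structure of $\sim$ also guarantees that $\widetilde{\mu}$ is consistent with the weight-function axiom: a class $[v]_\sim$ of a vertex contains only vertices and hence has $\widetilde{\mu}([v]_\sim) = 1$, whereas $\widetilde{\mu}([x]_\sim) = \infty$ forces every $s \sim x$ to have $\mu(s) = \infty$, and in particular $x$ itself is an edge.
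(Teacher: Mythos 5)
The paper states this proposition without any proof, so there is nothing to compare against; judged on its own, your argument is correct and supplies exactly the checks that are needed. The stratification observation (each $\sim$-class lies entirely within $V$, $E$, or $F$), the derivation of $\iota(x)\sim\iota(y)$ from clause (1) via $\alpha\circ\iota=\omega$ and $\iota\circ\iota=\mathrm{id}$, the use of clause (2) for $\widetilde{\partial}$ on 2-cell classes, and the remark that the $\mathrm{GCD}$ formula for $\widetilde{\mu}$ depends only on the class (not the representative) are all the right ingredients; you also correctly separate representative-independence from the distinct question of whether the value of $\widetilde{\partial}$ is genuinely a cycle of the quotient, which you settle by computing $\widetilde{\omega}([e_i]_\sim)=\widetilde{\alpha}([e_{i+1}]_\sim)$. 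One small caveat: for vertex classes you need $\alpha(v)=v$ and $\omega(v)=v$ for every $v\in V$, and you present this as forced by $\alpha\circ\alpha=\alpha$. Strictly, idempotency only yields $\alpha(w)=w$ for $w$ in the image of $\alpha$, so a vertex outside that image could a priori be sent elsewhere, in which case $\widetilde{\alpha}$ would fail to be well-defined for an arbitrary equivalence relation on $V$. This is clearly the intended convention (every example in the paper satisfies it, e.g. $\mathrm{FC}(M)$ has $\alpha=\mathrm{id}_M$), but it should be recorded as a standing assumption on graphs rather than derived from the listed axioms; with that proviso your proof is complete.
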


We show that the category $\mathsf{WC}_\mathsf{2}$ has equalisers of all pairs of parallel morphisms.
Consider the following diagram in the category $\mathsf{WC}_\mathsf{2}$:
\begin{equation}\label{diagram_1_parallel_pair}
    \begin{tikzcd}[row sep=huge]
        C_1 \arrow[r, shift left, "\phi"] \arrow[r, shift right, "\psi"'] & C_2
    \end{tikzcd},
\end{equation}
where $C_1, C_2 \in \text{Ob}(\mathsf{WC}_\mathsf{2})$, $C_1 = (V_1 \sqcup E_1 \sqcup F_1, \iota_1, \alpha_1, \omega_1, \partial_1, \mu_1)$, $C_2 = (V_2 \sqcup E_2 \sqcup F_2, \iota_2, \alpha_2, \omega_2, \partial_2, \mu_2)$.
Consider the set $V_\text{eq} \sqcup E_\text{eq} \sqcup F_\text{eq} \subseteq V_1 \sqcup E_1 \sqcup F_1$, where:
$$V_\text{eq} = \{ v \in V_1\, |\, \phi(v) = \psi(v) \},\quad E_\text{eq} = \{ e \in E_1\, |\, \phi(e) = \psi(e) \},\quad F_\text{eq} = \{ f \in F_1\, |\, \phi(f) = \psi(f) \}.$$
\begin{definition}
    \emph{An equalizer of a pair of morphisms $\phi$ and $\psi$} is the weighted 2-complex of the following form: $$\text{Eq}(\phi,\psi) = \left( V_\text{eq} \sqcup E_\text{eq} \sqcup F_\text{eq},\ \iota|_{V_\text{eq} \sqcup E_\text{eq}},\ \alpha|_{V_\text{eq} \sqcup E_\text{eq}},\ \omega|_{V_\text{eq} \sqcup E_\text{eq}},\ \partial|_{V_\text{eq} \sqcup F_\text{eq}},\ \mu|_{E_\text{eq}} \right)$$
    together with the natural embedding $\text{eq}\colon \text{Eq}(\phi,\psi) \xhookrightarrow{} C_1$, $x \mapsto x$.
\end{definition}

\begin{proposition}[Universal property of the equaliser]
    The embedding $\text{eq}$ of an equalizer of a pair of morphisms $\phi$ and $\psi$ to a weighted 2-complex $C_1$ has the following properties:
    \begin{enumerate}
        \item
        the following diagram commutes:
        $$\begin{tikzcd}[row sep=huge]
            \text{Eq}(\phi, \psi) \ar[r,hook, "\text{eq}"] & C_1 \arrow[r, shift left, "\phi"] \arrow[r, shift right, "\psi"'] & C_2
        \end{tikzcd}$$
        that is $\phi \circ \text{eq} = \psi \circ \text{eq}$;
    
        \item 
        for any morphism of weighted 2-complexes $\sigma \colon X \to C_1$, $X \in \text{Ob}(\mathsf{WC}_\mathsf{2})$, if $\sigma$ satisfies $\phi \circ \sigma = \psi \circ \sigma$, then $\sigma$ factors through the morphism $\text{eq}$ uniquely, i.e. there exists a unique morphism of weighted 2-complexes $\rho \colon X \to \text{Eq}(\phi, \psi)$, which makes the following diagram commute:
        \[
        \begin{tikzcd}
            X \ar[d,dashed,"\rho"'] \ar[rd,"\sigma"] & & \\
            \text{Eq}(\phi, \psi) \ar[r,hook, "\text{eq}"'] & C_1 \ar[r, shift left, "\phi"] \ar[r, shift right, "\psi"'] & C_2
        \end{tikzcd}
        \]
    \end{enumerate}
\end{proposition}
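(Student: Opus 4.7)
The plan is to proceed in three stages: (i) verify that $\text{Eq}(\phi,\psi)$ is indeed a weighted 2-complex and that $\text{eq}$ is a morphism of weighted 2-complexes; (ii) check the equation $\phi\circ\text{eq}=\psi\circ\text{eq}$; and (iii) construct the unique factorising morphism $\rho$.

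For (i), I must show that the restrictions of $\iota_1,\alpha_1,\omega_1$ stay inside $V_\text{eq}\sqcup E_\text{eq}$, that $\partial_1|_{V_\text{eq}\sqcup F_\text{eq}}$ takes values in $\text{Cyc}(\text{Eq}(\phi,\psi))$, and that the weight function restricts. The three structural maps yield to a short diagram chase: for $e\in E_\text{eq}$, commutativity of the morphism diagrams in Definition~\ref{def_1_morph_2_comp} gives $\phi(\alpha_1(e))=\alpha_2(\phi(e))=\alpha_2(\psi(e))=\psi(\alpha_1(e))$, so $\alpha_1(e)\in V_\text{eq}$, and analogously for $\omega_1$ and $\iota_1$. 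The weight is preserved by sheer restriction. Once the (harder) boundary closure below is granted, $\text{eq}$ becomes a morphism by construction, and (ii) is immediate from the defining condition of $V_\text{eq},E_\text{eq},F_\text{eq}$.

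For (iii), given $\sigma\colon X\to C_1$ with $\phi\circ\sigma=\psi\circ\sigma$, the identity $\phi(\sigma(x))=\psi(\sigma(x))$ holds for every $x$, so the image of $\sigma$ lies entirely in $V_\text{eq}\sqcup E_\text{eq}\sqcup F_\text{eq}$. I define $\rho$ as the set-theoretic corestriction of $\sigma$; since $\text{eq}$ acts as the identity on underlying elements, every diagram that commutes for $\sigma$ also commutes for $\rho$, and the weight-divisibility condition transfers verbatim. Uniqueness follows from Proposition~\ref{prop_1_morphisms_vertices}: any two factorisations $\rho,\rho'$ must agree with $\sigma$ on $V_X$, and hence coincide on the whole complex.

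The main obstacle is the closure of the boundary map on $F_\text{eq}$. For $f\in F_\text{eq}$ with $\partial_1(f)=[e_1,\ldots,e_t]$, I must show that every $e_i$ belongs to $E_\text{eq}$; the traversed vertices then automatically lie in $V_\text{eq}$ by the already-established closure under $\alpha_1,\omega_1$. From $\phi(f)=\psi(f)$ together with the commutativity of the boundary diagram I obtain $[\phi(e_1),\ldots,\phi(e_t)]=[\psi(e_1),\ldots,\psi(e_t)]$ as cycles in $K_2$. Invoking Remark~\ref{remark_1_graphs} (no multiple edges or loops) and Remark~\ref{remark_2_complexes} (distinct 2-cells have distinct boundaries), one shows that the shared cyclic traversal inherited from $\partial_1(f)$ pins the edges down uniquely: once matching starting endpoints are identified, absence of multiple edges forces $\phi(e_i)=\psi(e_i)$ term-by-term around the cycle, completing the closure argument.
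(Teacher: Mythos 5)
Your skeleton for parts (1) and (2) matches the paper's: the paper likewise obtains $\phi\circ\text{eq}=\psi\circ\text{eq}$ directly from the definition of $V_\text{eq}\sqcup E_\text{eq}\sqcup F_\text{eq}$, constructs $\rho$ by restricting $\sigma$ (specified on vertices and extended via Proposition~\ref{prop_1_morphisms_vertices}), and derives uniqueness from $\text{eq}$ being a monomorphism. You are in fact more careful than the paper in your stage (i), which the paper omits entirely; your closure argument for $E_\text{eq}$ and $V_\text{eq}$ under $\iota_1,\alpha_1,\omega_1$ is correct.

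The gap sits exactly where you place ``the main obstacle'': closure of the boundary map on $F_\text{eq}$ does not follow from your argument, and is in fact false for the sets as defined. From $\phi(f)=\psi(f)$ you only get $[\phi(e_1),\ldots,\phi(e_t)]=[\psi(e_1),\ldots,\psi(e_t)]$ as an equality of cycles, i.e.\ of subgraphs, and by the paper's own convention this equality is insensitive to cyclic shifts and reversal of the edge list; there is no ``matching starting endpoint'' available to anchor your term-by-term comparison. Concretely, let $C_1=C_2$ be a triangle on $v_1,v_2,v_3$ with all edges of weight $2$ and a single 2-cell $f$ with $\partial(f)=[e_{12},e_{23},e_{31}]$; take $\phi=\mathrm{id}$ and $\psi$ the rotation $v_1\mapsto v_2\mapsto v_3\mapsto v_1$. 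Then $\widehat{\psi}(\partial f)=[e_{23},e_{31},e_{12}]=\partial f$, so $\psi(f)=f$ by Remark~\ref{remark_2_complexes} and $f\in F_\text{eq}$, yet $V_\text{eq}=E_\text{eq}=\varnothing$. Hence $\text{Eq}(\phi,\psi)$ as defined is not even a 2-complex, and no closure argument can succeed without first repairing the definition, e.g.\ by intersecting $F_\text{eq}$ with the set of 2-cells all of whose boundary edges lie in $E_\text{eq}$ (the universal property survives this change, since for a 2-cell $x$ of $X$ with $\sigma(x)=f\in F_1$ every edge of $\partial_1(f)$ is of the form $\sigma(d)$ and hence is equalized). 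This defect is inherited from the paper rather than introduced by you, but the term-by-term pinning step as you wrote it would fail.
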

\begin{proof}
    Property (1) is satisfied by the definition of the morphism $\text{eq}$.
    Now we prove property (2).
    Let $X = (V_X \sqcup E_X \sqcup F_X, \iota_X, \alpha_X, \omega_X, \partial_X, \mu_X)$ be a weighted 2-complex and let $\sigma \colon X \to C_1$ be a morphism of weighted 2-complexes such that $\phi \circ \sigma = \psi \circ \sigma$.
    Define the morphism $\rho\colon X\to\text{Eq}(\phi,\psi)$.
    For any vertex $v \in V_X$ of a weighted 2-complex $X$, assume that $\rho(v) = \sigma(v)$.
    By specifying the values of $\rho$ on the vertices we define a morphism of weighted 2-complexes $\rho$.
    Note that $\text{eq}|_{V_\text{eq}} \circ \rho|_{V_X} = \sigma|_{V_X}$.
    Thus, $\text{eq} \circ \rho = \sigma$.
    At the same time, if $\rho' \colon X \to \text{Eq}(\phi,\psi)$ is a morphism which satisfies $\text{eq} \circ \rho' = \sigma$, then $\text{eq} \circ \rho' = \text{eq} \circ \rho$.
    Therefore, due to the fact that $\text{eq}$ is an embedding (a monomorphism), $\rho' = \rho$.
    It means that the morphism $\rho$ is unique.
\end{proof}

Define the relation $\sim$ on the set of vertices $V_2$ of the complex $C_2$ in the diagram \eqref{diagram_1_parallel_pair}.
For a pair of vertices $u, v \in V_2$ assume that $u \sim v$ if and only if there exists an element $x \in V_1 \sqcup E_1 \sqcup F_1$, for which $\phi(x) = u$ and $\psi(x) = v$.
The described relation $\sim$ induces an equivalence relation which satisfies Definition~\ref{def_3_eq_rel}, i.e. $\sim_{\phi,\psi}$ is an equivalence relation on the weighted 2-complex $C_2$.

\begin{definition}
    A \emph{coequalizer of a pair of morphisms $\phi$ and $\psi$} is the following quotient complex: $$\text{Coeq}(\phi,\psi) := \faktor{C_2}{\sim_{\phi,\psi}}$$
    together with the natural projection $\text{coeq} \colon C_2 \twoheadrightarrow \text{Coeq}(\phi,\psi)$, $x \mapsto [x]_{\sim_{\phi,\psi}}$.
\end{definition}

\begin{proposition}[Universal property of the coequalizer]
    The morphism $\text{coeq}$ from the weighted 2-complex $C_2$ to the coequalizer of a pair of morphisms $\phi$ and $\psi$ has the following properties:
    \begin{enumerate}
        \item
        the following diagram commutes:
        $$\begin{tikzcd}[row sep=huge]
            C_1 \arrow[r, shift left, "\phi"] \arrow[r, shift right, "\psi"'] & C_2 \ar[r,->>,near start,"\text{coeq}"] & \text{Coeq}(\phi,\psi)
        \end{tikzcd}$$
        that is $\text{coeq} \circ \phi = \text{coeq} \circ \psi$;
    
        \item 
        for any morphism of weighted 2-complexes $\sigma \colon C_2 \to X$, $X \in \text{Ob}(\mathsf{WC}_\mathsf{2})$, if $\sigma$ satisfies  $\sigma \circ \phi = \sigma \circ \psi$, then  $\text{coeq}$ factors through $\sigma$ uniquely, i.e. there exists a unique morphism of weighted 2-complexes $\rho \colon \text{Coeq}(\phi, \psi) \to X$, which makes the following diagram commute:
        $$\begin{tikzcd}
            C_1 \arrow[r, shift left, "\phi"] \arrow[r, shift right, "\psi"'] & C_2 \ar[rd,"\sigma"'] \ar[r,->>,near start,"\text{coeq}"] & \text{Coeq}(\phi,\psi) \ar[d,dashed,"\rho"]\\
            & & X
        \end{tikzcd}$$
    \end{enumerate}
\end{proposition}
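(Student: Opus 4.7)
The plan is to handle part (1) by a direct check at the level of vertices, and then to construct the mediating morphism $\rho$ in part (2) by defining the assignment $[x]_{\sim_{\phi,\psi}} \mapsto \sigma(x)$, verifying that it descends to the quotient, and then checking the morphism axioms.

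For part (1), I would observe that for every $v \in V_1$ both $\phi(v)$ and $\psi(v)$ lie in $V_2$, so taking $x = v$ in the definition of $\sim_{\phi,\psi}$ gives $\phi(v) \sim_{\phi,\psi} \psi(v)$ immediately. Hence $\text{coeq} \circ \phi$ and $\text{coeq} \circ \psi$ agree on $V_1$, and Proposition~\ref{prop_1_morphisms_vertices} forces them to coincide as morphisms of weighted 2-complexes.

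For part (2), I would set $\rho([x]_{\sim_{\phi,\psi}}) := \sigma(x)$ and spend most of the argument showing this is well-defined. On vertices the hypothesis $\sigma \circ \phi = \sigma \circ \psi$ makes $\sigma$ constant on generating pairs of $\sim_{\phi,\psi}$, and transitivity does the rest. The main obstacle is extending well-definedness to edges and 2-cells, which I expect to require a short case analysis leaning on the standing assumptions in Remarks~\ref{remark_1_graphs} and~\ref{remark_2_complexes}. For edges $e_1 \sim_{\phi,\psi} e_2$, the images $\sigma(e_1), \sigma(e_2) \in V_X \sqcup E_X$ have matching endpoints in $X$: if these endpoints coincide, the no-loops assumption forces both images to be the common vertex, while if they differ the no-multiple-edges assumption forces both images to be the same edge. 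For 2-cells $f_1 \sim_{\phi,\psi} f_2$, the edge-by-edge equality on boundaries gives $\widehat{\sigma}(\partial_2(f_1)) = \widehat{\sigma}(\partial_2(f_2))$, and a comparison of cycle lengths together with Remark~\ref{remark_2_complexes} rules out the mixed case in which one of $\sigma(f_i)$ is a vertex and the other a 2-cell, pinning down $\sigma(f_1) = \sigma(f_2)$.

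Once well-definedness is established, the morphism axioms for $\rho$ are essentially free: commutativity of $\rho$ with $\widetilde{\iota}, \widetilde{\alpha}, \widetilde{\omega}, \widetilde{\partial}$ is inherited directly from the corresponding identities for $\sigma$ together with the fact that the structure maps on the quotient are the induced ones, and the divisibility condition $\mu_X(\rho([x]_{\sim})) \mid \widetilde{\mu}([x]_{\sim})$ follows because $\mu_X(\sigma(x))$ divides $\mu_2(s)$ for every $s \sim_{\phi,\psi} x$ (as $\sigma(s) = \sigma(x)$ and $\sigma$ is a morphism) and therefore divides their greatest common divisor. The factorization $\rho \circ \text{coeq} = \sigma$ holds by construction, and uniqueness is immediate from Proposition~\ref{prop_1_morphisms_vertices} since any $\rho'$ satisfying $\rho' \circ \text{coeq} = \sigma$ is forced on vertices by $\rho'([v]_{\sim}) = \sigma(v)$.
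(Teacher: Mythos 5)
Your proof is correct and follows essentially the same route as the paper: both define $\rho\bigl([x]_{\sim_{\phi,\psi}}\bigr) = \sigma(x)$, note that well-definedness on vertices comes from $\sigma \circ \phi = \sigma \circ \psi$, obtain $\rho \circ \text{coeq} = \sigma$ by construction, and derive uniqueness from the fact that a morphism is determined by its values on vertices (equivalently, from $\text{coeq}$ being an epimorphism). The only difference is one of detail: where the paper compresses the extension of $\rho$ to edges and 2-cells into the single sentence ``by specifying the values of $\rho$ on the vertices we define a morphism,'' you carry out the well-definedness check on edges and 2-cells explicitly via Remarks~\ref{remark_1_graphs} and~\ref{remark_2_complexes}, which is a welcome filling-in rather than a different argument.
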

\begin{proof}
    First we prove property (1).
    Let $x \in V_1 \sqcup E_1 \sqcup F_1$.
    Then $\phi(x) \sim_{\phi,\psi} \psi(x)$ by the definition of relation $\sim_{\phi,\psi}$.
    Thus, $[\phi(x)] = [\psi(x)]$ and $\text{coeq} \circ \phi = \text{coeq} \circ \psi$.

    Now we prove property (2).
    Let $X = (V_X \sqcup E_X \sqcup F_X, \iota_X, \alpha_X, \omega_X, \partial_X, \mu_X)$ be a weighted 2-complex and let $\sigma \colon C_2 \to X$ be a morphism of weighted 2-complexes such that $\sigma \circ \phi = \sigma \circ \psi$.
    Define the morphism $\rho\colon \text{Coeq}(\phi,\psi) \to X$.
    For any equivalence class $[v]_{\sim_{\phi,\psi}}$ of a vertex $v \in V_2$ assume that $\rho\left([v]_{\sim_{\phi,\psi}}\right) = \sigma(v)$.
    The value $\rho\left([v]_{\sim_{\phi,\psi}}\right)$ does not depend on the choice of a representative in the class $[v]_{\sim_{\phi,\psi}}$ because the pair $\phi$, $\psi$ is coequalized by $\sigma$.
    By specifying the values of $\rho$ on the vertices we define a morphism of weighted 2-complexes $\rho$.
    Note that $\rho|_{V_X} \circ \text{coeq}|_{V_2} = \sigma|_{V_2}$.
    It means that $\rho \circ \text{coeq} = \sigma$.
    At the same time, if $\rho' \colon \text{Coeq}(\phi,\psi) \to X$ is a morphism which satisfies $\rho' \circ \text{coeq} = \sigma$, then $\rho' \circ \text{coeq} = \rho \circ \text{coeq}$.
    Therefore, due to the fact that $\text{coeq}$ is a projection (an epimorphism), $\rho' = \rho$.
    It means that the morphism $\rho$ is unique.
\end{proof}

Let $\{ C_i \}_{i \in I}$ be a set of weighted 2-complexes, $C_i = (V_i \sqcup E_i \sqcup F_i, \iota_i, \alpha_i, \omega_i, \partial_i, \mu_i)$ for all $i \in I$.

\begin{definition}
    The \emph{disjoint union} of the set of weighted 2-complexes $\{ C_i \}_{i \in I}$ is the weighted 2-complex
    $$\bigsqcup_{i \in I} C_i := \left( \bigsqcup_{i \in I} \left(V_i \sqcup E_i \sqcup F_i\right),\, \iota_\sqcup,\, \alpha_\sqcup,\, \omega_\sqcup,\, \partial_\sqcup,\, \mu_\sqcup \right),$$
    where for all $i \in I$, $e \in V_i \sqcup E_i$, $f \in V_i \sqcup F_i$, $x \in V_i \sqcup E_i \sqcup F_i$:
    $$\iota_\sqcup(e) = \iota_i(e),\quad \alpha_\sqcup(e) = \alpha_i(e),\quad \omega_\sqcup(e) = \omega_i(e),\quad \partial_\sqcup(f) = \partial_i(f),\quad \mu_\sqcup(x) = \mu_i(x).$$
\end{definition}

For any collection of weighted 2-complexes  $\{ C_i \}_{i \in I}$ and for any $j \in I$ there exists a natural embedding $\eta_j \colon C_j \xhookrightarrow{} \bigsqcup_{i \in I} C_i$ of $j$-th complex $C_j$ to the disjoint union $\bigsqcup_{i \in I} C_i$.

\begin{proposition}[Universal property of the disjoint union]
    The disjoint union defines the coproduct in the category $\mathsf{WC}_\mathsf{2}$, i.e. for a set of morphisms $\{\phi_i\colon C_i \to X\}_{i \in I}$, $X \in \text{Ob}(\mathsf{WC}_\mathsf{2})$, there exists a unique morphism $\phi \colon \bigsqcup_{i \in I} C_i \to X$, which for every $j \in I$ makes the following diagram commute:
    \[
    \begin{tikzcd}
        C_j \ar[dr,"\phi_j"'] \ar[r,"\eta_j"] & \bigsqcup_{i \in I} C_i \ar[d,dashed,"\phi"]\\
        & X
    \end{tikzcd}
    \]
    We denote the morphism $\phi$ as $\bigsqcup_{i \in I} \phi_i$ and we call it the \emph{disjoint union (or coproduct) of a family of morphisms} $\{\phi_i\}_{i \in I}$.
\end{proposition}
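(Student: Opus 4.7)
The plan is to define the morphism $\phi$ componentwise using the universal property of disjoint union of sets, and then verify it respects all structural data (inversions, endpoints, boundaries, weights); uniqueness will follow immediately from Proposition~\ref{prop_1_morphisms_vertices}.

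First I would describe $\phi$ on the underlying set. Since the underlying set of $\bigsqcup_{i\in I} C_i$ is literally the set-theoretic disjoint union $\bigsqcup_{i \in I}(V_i \sqcup E_i \sqcup F_i)$, any element $x$ belongs to a unique component $C_j$, and I set $\phi(x) := \phi_j(x)$. This is forced by the required commutativity $\phi \circ \eta_j = \phi_j$, which shows there can be at most one candidate map of sets; the only question is whether it is a morphism in $\mathsf{WC}_\mathsf{2}$.

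Next I would verify the two conditions of Definition~\ref{def_1_morph_2_comp} together with the weight-divisibility condition. The sort conditions $\phi(V) \subseteq V_X$, $\phi(E) \subseteq V_X \sqcup E_X$, $\phi(F) \subseteq V_X \sqcup F_X$ are immediate since each $\phi_j$ satisfies them on its component. For any $\sigma \in \{\iota_\sqcup, \alpha_\sqcup, \omega_\sqcup, \partial_\sqcup\}$, pick $x$ in the $j$-th component; then $\sigma(x)$ lies in the same component by the definition of $\sigma_\sqcup$, so
\[
    \phi(\sigma_\sqcup(x)) = \phi_j(\sigma_j(x)) = \sigma_X(\phi_j(x)) = \sigma_X(\phi(x)),
\]
using that $\phi_j$ is a morphism. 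The same componentwise argument gives $\mu_\sqcup(x) = \mu_j(x)$ is divisible by $\mu_X(\phi_j(x)) = \mu_X(\phi(x))$. For the boundary square one must observe in addition that the induced cycle map $\widehat{\phi}$ sends $\widehat{\phi}([e_1,\ldots,e_t]) = [\phi(e_1),\ldots,\phi(e_t)]$, and cycles in $\bigsqcup_i C_i$ lie in a single component (since the structure maps do not mix components), so the identity $\widehat{\phi}\circ \partial_\sqcup = \partial_X \circ \phi$ reduces to the corresponding identity for $\phi_j$ on each component.

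For uniqueness, suppose $\phi' \colon \bigsqcup_{i\in I} C_i \to X$ also satisfies $\phi' \circ \eta_j = \phi_j$ for every $j \in I$. Then for every vertex $v$ of the disjoint union, $v$ belongs to some $V_j$ and $\phi'(v) = \phi_j(v) = \phi(v)$; hence $\phi'$ and $\phi$ agree on $\text{V}\bigl(\bigsqcup_{i \in I} C_i\bigr)$, and Proposition~\ref{prop_1_morphisms_vertices} forces $\phi' = \phi$. There is no real obstacle here: the whole argument is a componentwise check, and the only mildly delicate point is the observation that in a disjoint union every cycle is contained in a single summand, which is what lets the boundary square be verified piece by piece.
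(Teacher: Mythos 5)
Your proposal is correct and follows essentially the same route as the paper: define $\phi$ componentwise by $\phi(x) = \phi_j(x)$ and note this is forced by the commutativity requirement; the paper simply declares the morphism verification and uniqueness ``clear'' where you spell them out. (Your appeal to Proposition~\ref{prop_1_morphisms_vertices} for uniqueness is fine but not even needed, since $\phi'$ and $\phi$ already agree on the entire underlying set, every element lying in the image of some $\eta_j$.)
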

\begin{proof}
    Define the morphism $\phi \colon \bigsqcup_{i \in I} C_i \to X$.
   For each $j \in I$ and for every $x \in V_j \sqcup E_j \sqcup F_j$ assume $\phi(x) = \phi_j(x)$.
    In other words, $\phi = \bigsqcup_{i \in I} \phi_i$.
    It is clear that $\phi$ is a morphism of weighted 2-complexes and that $\phi$  is the unique morphism which satisfies the identities $\phi_i = \phi \circ \eta_i$,\ $i \in I$.
\end{proof}

The following construction generalizes the notion of the \emph{strong product of graphs} introduced by G.~Sabidussi in 1960 \cite{Sab}.

\begin{definition}
    The \emph{strong product} of the set of weighted 2-complexes $\{ C_i \}_{i \in I}$ is the weighted 2-complex
    $$\bigboxtimes_{i \in I} C_i := \left( \prod_{i \in I} \left(V_i \sqcup E_i\right) \cup \prod_{i \in I} \left(V_i \sqcup F_i\right),\, \iota_\boxtimes,\, \alpha_\boxtimes,\, \omega_\boxtimes,\, \partial_\boxtimes,\, \mu_\boxtimes \right),$$
    where for any $x = (x_i)_{i \in I} \in \prod_{i \in I} \left(V_i \sqcup E_i\right) \cup \prod_{i \in I} \left(V_i \sqcup F_i\right)$
    $$\iota_\boxtimes(x) = (\iota_i(x_i))_{i \in I},\quad \alpha_\boxtimes(x) = (\alpha_i(x_i))_{i \in I},\quad \omega_\boxtimes(x) = (\omega_i(x_i))_{i \in I},\quad \mu_\boxtimes(x) = \text{LCM}\{\mu_i(x_i)\, |\, i \in I\}$$
    and for any 2-cell $f = (f_i)_{i \in I} \in \prod_{i \in I} (V_i \sqcup F_i)$
    $$\partial_\boxtimes(f) = \bigboxtimes_{i \in I} \partial_i(f_i).$$
\end{definition}

For any collection of weighted 2-complexes $\{ C_i \}_{i \in I}$ and for any $j \in I$ there exists a natural projection $\pi_j \colon \bigboxtimes_{i \in I} C_i \twoheadrightarrow C_j$ of the strong product $\bigboxtimes_{i \in I} C_i$ onto the $j$-th complex $C_j$.

\begin{proposition}[Universal property of the strong product]
    The strong product defines the product in the category $\mathsf{WC}_\mathsf{2}$, i.e. for a set of morphisms $\{\phi_i\colon X \to C_i\}_{i \in I}$, $X \in \text{Ob}(\mathsf{WC}_\mathsf{2})$, there exists a unique morphism $\phi \colon X \to \bigboxtimes_{i \in I} C_i$, which for every  $j \in I$ makes the following diagram commute:
    \[
    \begin{tikzcd}
        \bigboxtimes_{i \in I} C_i \ar[r,->>,"\pi_j"] & C_j\\
        X \ar[u,dashed,"\phi"] \ar[ur,"\phi_j"'] &
    \end{tikzcd}
    \]
    We denote the morphism $\phi$ as $\bigboxtimes_{i \in I} \phi_i$ and we call it the strong product of a family of morphisms $\{\phi_i\}_{i \in I}$.
\end{proposition}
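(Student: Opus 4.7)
The plan is to construct $\phi$ component by component and then invoke Proposition~\ref{prop_1_morphisms_vertices} for uniqueness. Explicitly, for every $x$ in the underlying set of $X$, I would set $\phi(x) := (\phi_i(x))_{i \in I}$. The first routine check is that this assignment lands in the correct piece of the underlying set of the strong product. For $v \in V_X$ each $\phi_i(v) \in V_i$, so the tuple lies in $\prod_{i \in I} V_i$, which is contained in both factors of the union defining the underlying set. For $e \in E_X$, condition (1) of Definition~\ref{def_1_morph_2_comp} applied to each $\phi_i$ gives $\phi_i(e) \in V_i \sqcup E_i$, so $\phi(e) \in \prod_{i \in I}(V_i \sqcup E_i)$; and analogously $\phi(f) \in \prod_{i \in I}(V_i \sqcup F_i)$ for $f \in F_X$.

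The commutativity conditions of Definition~\ref{def_1_morph_2_comp} for $\phi$ with $\iota, \alpha, \omega$ reduce coordinatewise to the same conditions for each $\phi_i$, and hence hold since each $\phi_i$ is a morphism. For the weight condition, $\mu_X(x)$ is divisible by $\mu_i(\phi_i(x))$ for every $i$ because $\phi_i$ is a morphism of weighted 2-complexes; therefore $\mu_X(x)$ is divisible by $\text{LCM}\{\mu_i(\phi_i(x)) \mid i \in I\} = \mu_\boxtimes(\phi(x))$. The factorization identity $\pi_j \circ \phi = \phi_j$ holds by the very definition of $\pi_j$ and $\phi$. For uniqueness, suppose $\phi' \colon X \to \bigboxtimes_{i \in I} C_i$ is any morphism making the diagram commute. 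Then for $v \in V_X$ and each $j \in I$, $\pi_j(\phi'(v)) = \phi_j(v) = \pi_j(\phi(v))$, forcing $\phi'(v) = \phi(v)$; Proposition~\ref{prop_1_morphisms_vertices} then yields $\phi' = \phi$.

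The main obstacle is the boundary square. One needs to verify $\partial_\boxtimes \circ \phi = \widehat{\phi} \circ \partial_X$ on a 2-cell $f \in F_X$. Writing $\partial_X(f) = [e_1, \ldots, e_t]$ and using that each $\phi_i$ is a morphism, one has $\partial_i(\phi_i(f)) = \widehat{\phi_i}(\partial_X(f)) = [\phi_i(e_1), \ldots, \phi_i(e_t)]$, so all the cycles $\partial_i(\phi_i(f))$ share the common length $t$ with matching indexing. The key auxiliary observation is that on cycles of equal length the strong product operation $\bigboxtimes$ reassembles coordinatewise edges into a single cycle, yielding
$$\partial_\boxtimes(\phi(f)) = \bigboxtimes_{i \in I} [\phi_i(e_1), \ldots, \phi_i(e_t)] = [(\phi_i(e_1))_{i \in I}, \ldots, (\phi_i(e_t))_{i \in I}] = [\phi(e_1), \ldots, \phi(e_t)] = \widehat{\phi}(\partial_X(f)).$$
Once this compatibility of $\bigboxtimes$ with $\widehat{(\cdot)}$ on matching-length cycles is in hand (which should already be packaged into the well-definedness of $\partial_\boxtimes$ in the preceding definition), the rest of the argument is immediate.
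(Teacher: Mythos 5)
Your proposal is correct and follows essentially the same route as the paper's proof: define $\phi(x) = (\phi_i(x))_{i\in I}$ coordinatewise, observe $\pi_j\circ\phi=\phi_j$, and deduce uniqueness from the fact that the projections determine each coordinate (the paper simply asserts these verifications are clear, appealing implicitly to Proposition~\ref{prop_1_morphisms_vertices} as you do explicitly). Your additional checks --- that the tuple lands in the correct stratum of the underlying set, the LCM divisibility for the weights, and the reassembly of matching-length cycles in the boundary square --- are exactly the details the paper leaves to the reader, and they are carried out correctly.
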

\begin{proof}
    Let $X = (V \sqcup E \sqcup F, \iota, \alpha, \omega, \partial, \mu)$ be a weighted 2-complex.
    Define the morphism $\phi \colon X \to \bigboxtimes_{i \in I} C_i$.
    For each $x \in V \sqcup E \sqcup F$ assume that $\phi(x) = (\phi_i(x))_{i \in I}$.
    Then for any $j \in I$, $\phi_j = \pi_j \circ \phi$ is satisfied.
    It is clear, that any morphism which satisfies this identity coincides with $\phi$.
\end{proof}

The following properties of operations of the disjoint union and strong product are obvious.
\begin{proposition}
    For any weighted 2-complexes $C, C_1, C_2, C_3 \in \text{Ob}(\mathsf{WC}_\mathsf{2})$ the following identities hold:
    \begin{align*}
        C \sqcup C_\varnothing =\ &C = C_\varnothing \sqcup C, \,& C_1 \sqcup C_2 &= C_2 \sqcup C_1, \,& C_1 \sqcup (C_2 \sqcup C_3) &= (C_1 \sqcup C_2) \sqcup C_3;\\
        C \boxtimes C_\text{pt} \cong\ &C \cong C_\text{pt} \boxtimes C, \,& C_1 \boxtimes C_2 &\cong C_2 \boxtimes C_1, \,& C_1 \boxtimes (C_2 \boxtimes C_3) &\cong (C_1 \boxtimes C_2) \boxtimes C_3.
    \end{align*}
\end{proposition}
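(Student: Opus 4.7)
The plan splits into two parts, since the disjoint-union identities are strict equalities while the strong-product identities are only isomorphisms.

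For the $\sqcup$ identities, the key observation is that the set-theoretic disjoint union $\sqcup$ introduced at the beginning of the Preliminaries is strictly commutative, strictly associative, and has $\varnothing$ as a two-sided identity. Because the disjoint union of weighted 2-complexes is built by applying $\sqcup$ on vertices, edges, and 2-cells, and by defining $\iota_\sqcup, \alpha_\sqcup, \omega_\sqcup, \partial_\sqcup, \mu_\sqcup$ componentwise, each of the three equalities reduces slot-by-slot to the corresponding equality of sets and of componentwise-defined structure maps. Since $C_\varnothing$ has empty vertex, edge, and 2-cell sets (and empty structure maps), the tuples defining $C \sqcup C_\varnothing$, $C_\varnothing \sqcup C$, and $C$ coincide identically, and commutativity and associativity are inherited termwise from the set operation.

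For the $\boxtimes$ identities, the cleanest approach is to invoke the universal property of the strong product established in the preceding proposition. Since $\boxtimes$ realizes the categorical product in $\mathsf{WC}_\mathsf{2}$ and $C_\text{pt}$ is the terminal object, the three isomorphisms follow by standard categorical arguments. Namely, $C$ together with $\text{id}_C$ and the unique morphism $C \to C_\text{pt}$ satisfies the universal property of $C \boxtimes C_\text{pt}$, giving $C \boxtimes C_\text{pt} \cong C$ (and symmetrically on the other side); the symmetry isomorphism $C_1 \boxtimes C_2 \cong C_2 \boxtimes C_1$ is induced by swapping the projection pair; and the associator $(C_1 \boxtimes C_2) \boxtimes C_3 \cong C_1 \boxtimes (C_2 \boxtimes C_3)$ is obtained by iterating the universal property. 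Uniqueness of each induced morphism makes the two induced morphisms mutually inverse.

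The only point requiring any care (and the mildest obstacle) is to confirm that the candidate isomorphisms respect the full weighted-2-complex structure, in particular the weight $\mu_\boxtimes$ and the boundary $\partial_\boxtimes$. Compatibility with $\mu_\boxtimes$ reduces to the commutativity and associativity of $\text{LCM}$, together with the identity $\text{LCM}\{1, n\} = n$, which handles the interaction with $C_\text{pt}$ (whose unique vertex carries weight $1$). Compatibility with $\partial_\boxtimes$ reduces in turn to the analogous symmetry and associativity for the strong product of cycles appearing inside $\partial_\boxtimes$, which are themselves a direct manipulation of tuples of edges. Once these componentwise identities are verified, Proposition~\ref{prop_1_morphisms_vertices} (a morphism of 2-complexes is determined by its action on vertices) allows us to check each isomorphism on the vertex sets only, where the bijections in question are tautological.
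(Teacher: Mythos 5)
The paper offers no proof of this proposition at all --- it is introduced with ``the following properties \ldots{} are obvious'' --- so your write-up is not an alternative route but a correct filling-in of exactly the routine verification the author intends. Your reduction of the $\sqcup$ identities to strict set-level identities is valid precisely because the paper defines $A \sqcup B$ as the plain union $A \cup B$ of already-disjoint sets (hence strictly commutative, associative, and unital with $\varnothing$), and the universal-property argument for $\boxtimes$ together with the terminality of $C_\text{pt}$ is sound. One small remark: once you derive the comparison maps from the universal property, they are morphisms of weighted 2-complexes by construction, so the explicit $\text{LCM}$ and $\partial_\boxtimes$ compatibility checks in your last paragraph are only needed if one instead writes down the swap and reassociation bijections by hand --- either way the argument closes, with the proposition that morphisms are determined by their values on vertices guaranteeing the two induced maps are mutually inverse.
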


\begin{theorem}
    The category $\mathsf{WC}_\mathsf{2}$ is complete and cocomplete, that is in the category $\mathsf{WC}_\mathsf{2}$ limits and colimits of all small diagrams exist. 
\end{theorem}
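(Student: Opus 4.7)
The plan is to invoke the standard categorical theorem that a category admitting all small products together with all equalizers of parallel pairs is complete, and dually that one admitting all small coproducts together with all coequalizers is cocomplete. Every ingredient required is already at hand: small products exist by the universal property of the strong product $\bigboxtimes_{i \in I} C_i$ (for $I$ an arbitrary set), small coproducts by the disjoint union $\bigsqcup_{i \in I} C_i$, and equalizers and coequalizers by the explicit constructions $\text{Eq}(\phi,\psi)$ and $\text{Coeq}(\phi,\psi)$ established in the preceding propositions.

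Explicitly, given a small diagram $D \colon \mathcal{J} \to \mathsf{WC}_\mathsf{2}$, I would realise the limit by the classical product-and-equalizer recipe. Set
$$P = \bigboxtimes_{j \in \text{Ob}(\mathcal{J})} D(j), \qquad Q = \bigboxtimes_{f \in \text{Mor}(\mathcal{J})} D(\text{cod}(f)),$$
and use the universal property of the strong product to define parallel morphisms $\alpha, \beta \colon P \to Q$ by $\pi_f \circ \alpha = \pi_{\text{cod}(f)}$ and $\pi_f \circ \beta = D(f) \circ \pi_{\text{dom}(f)}$ for every morphism $f$ of $\mathcal{J}$. Then $\text{Eq}(\alpha, \beta)$, together with the cone maps $\pi_j \circ \text{eq} \colon \text{Eq}(\alpha,\beta) \to D(j)$, is the limit of $D$: commutativity of the cone is just $\alpha \circ \text{eq} = \beta \circ \text{eq}$, while the universal property follows by combining the universal property of $P$ (to factor a competing cone through $P$) with that of the equalizer (to guarantee the factorisation lands in $\text{Eq}(\alpha,\beta)$).

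Dually, for cocompleteness I would set $P' = \bigsqcup_{j \in \text{Ob}(\mathcal{J})} D(j)$ and $Q' = \bigsqcup_{f \in \text{Mor}(\mathcal{J})} D(\text{dom}(f))$, use the universal property of the disjoint union to define $\alpha', \beta' \colon Q' \to P'$ by $\alpha' \circ \eta_f = \eta_{\text{dom}(f)}$ and $\beta' \circ \eta_f = \eta_{\text{cod}(f)} \circ D(f)$, and take $\text{Coeq}(\alpha',\beta')$ with the cocone maps $\text{coeq} \circ \eta_j$ as the colimit of $D$. Because each construction invoked has already been established in $\mathsf{WC}_\mathsf{2}$, no genuine obstacle remains; the main (and only) item demanding care is to check that the universal factorisations through the product-equalizer pair (respectively coproduct-coequalizer pair) assemble coherently into morphisms of weighted 2-complexes, and this reduces to a routine diagram chase using the universal properties already proved.
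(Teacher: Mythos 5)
Your proof is correct and follows the same route as the paper: the paper simply cites the standard result (Mac Lane, Ch.~V, \S 2) that a category with all small products, all small coproducts, and (co)equalizers of all parallel pairs is complete and cocomplete, and appeals to the preceding propositions for those ingredients. You additionally unpack the product-and-equalizer recipe explicitly, which the paper leaves to the reference, but the underlying argument is identical.
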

\begin{proof}
   According to the proposition given in \S 2 of Chapter 5 in \cite{MacLane}, the statement of the theorem follows from the fact that the category $\mathsf{WC}_\mathsf{2}$ has all small products, all small coproducts, and equalizers and coequalizers of all pairs of parallel morphisms. 
\end{proof}

\section{Generalized Coxeter groups}

In this chapter we define generalized Coxeter groups, we describe some of their properties, and further we prove that the construction defines a functor from the category of weighted 2-complexes to the category of groups. 

\begin{definition}
Let $C = (V \sqcup E \sqcup F, \iota, \alpha, \omega, \partial, \mu)$ be a weighted 2-complex.
The \textit{generalized Coxeter group} is the group $G(C)$, defined by the generating set $V$ and the relations of the following three types:
\begin{align*}
    v^2 &= 1,\,& &\text{for any vertex}\ v \in V,\\
    (\alpha(e)\omega(e))^{\mu(e)} &= 1,\,& &\text{for any edge}\ e \in E,\\
    (\alpha(e_1) \ldots \alpha(e_t))^{\mu(f)} &= 1,\,& &\text{for any 2-cell}\ f \in F\ \text{with the boundary}\ \partial(f) = [e_1,\ldots,e_t].
\end{align*}
\end{definition}

Note that distinct ordered tuples of edges may define the boundary of the same 2-cell: if $f \in F$ and $\partial(f) = [e_1,e_2,\ldots,e_t]$, then
$$\partial(f) = [e_1,e_2,\ldots,e_t] = [e_t, e_1,\ldots,e_{t-1}] = [e_t, e_{t - 1}, \ldots, e_1].$$
In other words, cyclic shifts and inversion of the list of edges do not change the cycle which defines the boundary of the 2-cell. 

\begin{proposition}
The group $G(C)$ is well defined, i.e. it does not depend on the choice of the set of edges $e_1,\ldots,e_t$ defining the boundary $\partial(f) = [e_1,\ldots,e_t]$ for each 2-cell $f \in F$.
\end{proposition}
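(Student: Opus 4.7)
The plan is to reduce to the observation made immediately before the statement: two ordered tuples $(e_1, \ldots, e_t)$ and $(e_1', \ldots, e_t')$ represent the same cycle $\partial(f)$ if and only if they differ by some composition of the cyclic rotation $(e_1, \ldots, e_t) \mapsto (e_2, \ldots, e_t, e_1)$ and the reversal $(e_1, \ldots, e_t) \mapsto (e_t, e_{t-1}, \ldots, e_1)$. It therefore suffices to show that each of these two elementary moves, applied to the list defining the relation for $f$, produces an equivalent relation in $G(C)$ modulo the other defining relations.

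For a cyclic shift, set $w := \alpha(e_1)\alpha(e_2)\cdots\alpha(e_t)$. The shifted word $\alpha(e_2)\cdots\alpha(e_t)\alpha(e_1)$ is the conjugate $\alpha(e_1)^{-1} w\, \alpha(e_1)$, so its $\mu(f)$-th power equals $\alpha(e_1)^{-1} w^{\mu(f)} \alpha(e_1)$, which is trivial if and only if $w^{\mu(f)} = 1$. Iterating this single transposition covers all $t$ cyclic rotations. Notably, this step does not use the involution relations $v^2 = 1$; it is just a conjugacy argument.

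For the reversal, the involution relations are essential. Setting $v_i := \alpha(e_i)$, the reversed word is $v_t v_{t-1} \cdots v_1$. The relations $v_i^2 = 1$ give $v_i^{-1} = v_i$, so $w^{-1} = v_t^{-1} v_{t-1}^{-1} \cdots v_1^{-1} = v_t v_{t-1} \cdots v_1$. Thus the reversed word is literally $w^{-1}$ in $G(C)$, and its $\mu(f)$-th power is $w^{-\mu(f)}$, which vanishes precisely when $w^{\mu(f)} = 1$.

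The only obstacle worth flagging is the preliminary combinatorial fact that cyclic shifts and reversal generate all ambiguity in listing the edges of a cycle, which one can verify directly: the $t$ cyclic rotations together with their reversals yield exactly $2t$ orderings, matching the dihedral symmetry of an unoriented $t$-cycle. Once both elementary moves are shown to preserve the relation, any re-listing of the edges of $\partial(f)$ is handled by composition, and so the defining relation associated to $f$ is independent of the chosen representative, as required.
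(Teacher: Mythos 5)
Your proof is correct and follows essentially the same route as the paper's: decompose the ambiguity in listing the edges of a cycle into cyclic shifts and reversal, handle the reversal with the involution relations $v_i^{-1}=v_i$, and handle the shift by conjugation. The only cosmetic difference is that you phrase the shift step as a clean conjugacy argument, whereas the paper carries out the equivalent regrouping $(v_1\cdots v_t)^{\mu(f)}=1 \Leftrightarrow (v_t v_1\cdots v_{t-1})^{\mu(f)}=1$ explicitly.
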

\begin{proof}
Let $f \in F$ be a 2-cell in $C$ with $\partial(f) = [e_1,\ldots,e_t]$.
For every $i = 1,\ldots,t$ denote $v_i := \alpha(e_i)$. 
First we show that the group $G(C)$ does not depend on the choice of orientation of the 2-cell $f$:
$$1 = (v_1 \ldots v_t)^{\mu(f)}\, \Leftrightarrow\, 1 = (v_1 v_2 \ldots v_{t-1} v_t)^{-\mu(f)} = (v_t^{-1} v_{t-1}^{-1} \ldots v_2^{-1} v_1^{-1})^{\mu(f)} = (v_t v_{t-1} \ldots v_2 v_1)^{\mu(f)}.$$
Now we show that $G(C)$ does not depend on the choice of the initial vertex in the 2-cell $f$:
$1 = (v_1 \ldots v_t)^{\mu(f)}\ \Longleftrightarrow\ v_t v_{t-1} \ldots v_2 v_1 = (v_1 v_2 \ldots v_{t-1} v_t)^{\mu(f) - 1},$
\begin{align*}
    \Longleftrightarrow\ 1 &=\ v_t\ \LaTeXunderbrace{(v_1 v_2 \ldots v_{t-1} v_t)\ \ldots\ (v_1 v_2 \ldots v_{t-1} v_t)}_{\mu(f) - 1}\ v_1 v_2 v_3 \ldots v_{t-2} v_{t-1},\\
    \Longleftrightarrow\ 1 &=\ \LaTeXunderbrace{(v_t v_1 v_2 \ldots v_{t-2} v_{t-1})\ \ldots\ (v_t v_1 v_2 \ldots v_{t-2} v_{t-1})}_{\mu(f)} = (v_t v_1 v_2 \ldots v_{t-2} v_{t-1})^{\mu(f)}.\\
\end{align*}
\end{proof}

\subsection{Examples of generalized Coxeter groups}

The two simplest examples of weighted 2-complexes $C_\varnothing$ and $C_\text{pt}$ correspond to the two simplest examples of generalized Coxeter groups: $G(C_\varnothing) = 1,\ G(C_\text{pt}) = \mathbb{Z}_2$.
For a natural number $r \in \mathbb{N}$ define $V_r := \{v_1,\ldots,v_r\}$.
Consider the weighted 2-complex $C$ with the set of vertices $V_r$, without edges and without 2-cells.
In this case
$$G(C) = \langle v_1 | v_1^2 = 1 \rangle * \ldots * \langle v_r | v_r^2 = 1 \rangle = \LaTeXunderbrace{\mathbb{Z}_2 * \ldots * \mathbb{Z}_2}_{r}.$$
By connecting each pair of distinct vertices in the existing 2-complex with edges of weight $2$, we obtain the weighted 2-complex of the form: ${C = (V_r \sqcup V_r^2 \setminus \text{diag}\left(V_r^2\right), \iota, \alpha, \omega, \partial, \mu)}$, $\iota(v_i,v_j) = (v_j,v_i)$, $\alpha(v_i,v_j) = v_i$, $\omega(v_i,v_j) = v_j$,
$$\mu(x) = \begin{cases}
  1,\,& \text{if}\ x \in V_r,\\
  2,\,& \text{if}\ x \in V_r^2 \setminus \text{diag}\left(V_r^2\right).
\end{cases}$$
The generalized Coxeter group which corresponds to the constructed 2-complex is the direct product of $r$ copies of $\mathbb{Z}_2$
$$G(C) = \LaTeXunderbrace{\mathbb{Z}_2 \times \ldots \times \mathbb{Z}_2}_{r}.$$
Now consider the following weighted 2-complex:
$$C = (\{v_1,\ldots,v_n\} \sqcup \{(v_i,v_{i+1}),(v_{i+1},v_i)\}_{i=1}^{n-1} \sqcup \{(v_i,v_j)\colon |i-j| > 1\}, \iota, \alpha, \omega, \partial, \mu),$$
where $\alpha(v_i,v_j) = v_i$, $\omega(v_i,v_j)=v_j$, $\mu \colon (v_{i+1},v_i) \mapsto 3$, $(v_i,v_{i+1}) \mapsto 3$, for all $i,j\in \{1,\ldots,n-1\}$, $\mu \colon (v_i,v_j) \mapsto 2$ with $|i-j|>1$.
Then
$$G(C) = \langle v_1,\ldots,v_n\, |\ v_i^2 = 1,\ v_i v_{i+1} v_i = v_{i+1} v_i v_{i+1},\ v_iv_j=v_jv_i\ \text{with}\ |i-j|>1\ \rangle \cong S_{n+1}$$ is the symmetric group of order $(n+1)!$ .
Consider another weighted 2-complex,
$$C = (\{u,v\},\, \{(u,v), (v,u)\},\ \varnothing,\ \alpha, \omega, \partial, \mu, \nu),$$
where $\alpha(u,v) = u = \omega(v,u)$, $\alpha(v,u) = v = \omega(u,v)$, $\mu \colon (s,t) \mapsto n$, $(t,s) \mapsto n$ for some $n \in \mathbb{N}$.
Then
$$G(C) = \langle u, v\, |\ u^2 = v^2 = (uv)^n = 1\ \rangle \cong D_n$$
is the dihedral group of order $2n$.

Recall that the Coxeter group (see \cite{Cox}) is defined as a group which admits the following presentation $W = \langle\, s_1,\ldots,s_n\ |\ (s_is_j)^{m_{ij}} = 1\, \rangle$, where $m_{ij} \in \mathbb{N} \cup \{\infty\}$, $m_{ij} \geqslant 2$ with $i \neq j$, $m_{ii} = 1$.
For a Coxeter group $W$, the pair $(W,S)$ is called a Coxeter system, where $S = \{s_1,\ldots,s_n\}$ is the generating set.

\begin{proposition}
Any Coxeter group $W$ together with a chosen Coxeter system $(W,S)$ is a generalized Coxeter group.
\end{proposition}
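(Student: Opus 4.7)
The plan is to construct explicitly a weighted graph (i.e.\ a weighted 2-complex with empty set of 2-cells) $C = C(W,S)$ whose generalized Coxeter group $G(C)$ recovers $W$, simply by reading off the presentation and comparing it with the Coxeter presentation of $W$.

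First I would take the vertex set to be $V := S = \{s_1,\ldots,s_n\}$. For each unordered pair $\{i,j\}$ with $i \neq j$, I introduce a pair of mutually inverse edges $e_{ij}$ and $e_{ji} := \iota(e_{ij})$ with $\alpha(e_{ij}) = s_i$, $\omega(e_{ij}) = s_j$, and assign both the common weight $\mu(e_{ij}) = \mu(e_{ji}) := m_{ij} \in \mathbb{N} \cup \{\infty\}$. I set $F = \varnothing$, $\partial$ empty, and $\mu|_V \equiv 1$. This yields a legitimate weighted 2-complex: Remark~\ref{remark_1_graphs} is respected (each unordered pair contributes one edge together with its inverse, and no loops appear since $i \neq j$), and the weight function fulfils all required conditions thanks to Remark~\ref{remark_3_infinity}.

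Next I would read off the presentation of $G(C)$ directly from the definition. The generating set is $V = S$. The vertex relations $v^2 = 1$ reproduce $s_i^2 = 1$, matching the Coxeter convention $m_{ii} = 1$. Each edge $e_{ij}$ contributes the relation $(s_is_j)^{m_{ij}} = 1$; its inverse $e_{ji}$ contributes $(s_js_i)^{m_{ij}} = 1$, but this is equivalent to the previous relation modulo the involutions $s_i^2 = s_j^2 = 1$, exactly as in the computation carried out in the preceding proposition on orientation-reversal of boundaries. Since $F = \varnothing$ there are no 2-cell relations. Thus $G(C)$ admits exactly the Coxeter presentation of $(W,S)$, and the identity $S \to S$ extends to an isomorphism $G(C) \xrightarrow{\cong} W$.

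Since the construction is essentially a verbatim translation of the defining Coxeter data into the language of weighted graphs, there is no serious obstacle; the only point that might merit an explicit remark is the handling of pairs with $m_{ij} = \infty$. By Remark~\ref{remark_3_infinity}, $\infty$ is an admissible weight on edges, and the associated ``relation'' $(s_is_j)^\infty = 1$ is vacuous, in harmony with the standard Coxeter-theoretic convention that $m_{ij} = \infty$ imposes no relation between $s_i$ and $s_j$; equivalently, one could simply omit the edges of infinite weight and obtain the same group.
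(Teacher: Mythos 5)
Your proposal is correct and follows essentially the same route as the paper: both construct the weighted graph with vertex set $S$, a pair of mutually inverse edges of weight $m_{ij}$ for each unordered pair $\{s_i,s_j\}$ with $i \neq j$, no 2-cells, and then identify the resulting presentation of $G(C)$ with the Coxeter presentation of $(W,S)$. Your write-up is in fact more detailed than the paper's (which simply asserts ``it is easy to see that $G(C) \cong W$''), notably in checking that the inverse edge contributes an equivalent relation and in addressing the $m_{ij}=\infty$ case.
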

\begin{proof}
    Let $(W,S)$ be a Coxeter system, ${S = \{s_i\}_{i \in I}}$.
    Consider the weighted 2-complex
    $$C = (S,\ S^2 \setminus \text{diag}\left(S^2\right),\ \varnothing,\ \alpha,\ \omega,\ \partial,\ \mu,\ \nu),$$
    where $\alpha(s_i,s_j) = s_i$, $\omega(s_i,s_j) = s_j$, $\mu(s_i,s_j) = m_{ij}$, while $\partial$ and $\nu$ are empty functions.
    It is easy to see that $G(C) \cong W$.
\end{proof}

The two most important examples of generalized Coxeter groups are Gauss virtual braid groups on $n$ strands $GV\!P_n$ \cite{GVB} and the group of $k$-free braids on $n$ strands $G_n^k$ \cite{Man}.
The group $GV\!P_n$ admits the following presentation:
\begin{align*}
    GV\!P_n = \langle \lambda_{i,j}, 1 \le i < j \le n\, |\ &\lambda_{i,j}^2=1,\ \lambda_{i,j}\lambda_{k,l} = \lambda_{k,l}\lambda_{i,j},\ \lambda_{k,i}\lambda_{k,j}\lambda_{i,j} = \lambda_{i,j}\lambda_{k,j}\lambda_{k,i},\\
    &\text{where}\ i, j, k, l\ \text{denote pairwise distinct indices}\ \rangle.
\end{align*}
Let $n,k \in \mathbb{N}$.
We denote $[n] := \{1,\ldots,n\}$ and $[n]^k := \{\rho \subseteq [n] \colon |\rho| = k\}$ the set of all  $k$-element subsets in $[n]$.
The group $G_n^k$ is given by the generating set $[n]^k$ and relations of the following three types:
\begin{align*}
    \rho^2 = 1, & \,& &\text{for any}\ \rho \in [n]^k;\\
    \rho_1 \rho_2 = \rho_2 \rho_1, & \,& &\text{for}\ \rho_1,\rho_2 \in [n]^k\ \text{such that}\ |\rho_1 \cap \rho_2| < k-1;\\
    \rho_{\sigma(1)} \ldots \rho_{\sigma(k+1)} = \rho_{\sigma(k+1)} \ldots \rho_{\sigma(1)}, & \,& &\text{where}\ \rho_j := U \setminus \{u_j\},\ \text{for all}\ U = \{u_1,\ldots,u_{k+1}\}\in [n]^{k+1}\\
    & \,& &\text{and for all permutations}\ \sigma \in S_{k+1}.
\end{align*}
From the presentations of groups $GV\!P_n$ and $G_n^k$ given above, the existence and the explicit construction of weighted 2-complexes $C_n$ and $C_{n,k}$ such that $G(C_n) = GV\!P_n$ and $G(C_{n,k}) = G_n^k$ are obvious.

\end{document}